\newtheorem{theorem}{Theorem}[section]
\newtheorem{lemma}[theorem]{Lemma}
\newtheorem{proposition}[theorem]{Proposition}
\newtheorem{definition}[theorem]{Definition}
\newtheorem{assumption}{Assumption}
\newtheorem{remark}[theorem]{Remark}
\newtheorem{example}[theorem]{Example}
\newcommand{\E}{\mathbb{E}}
\newcommand{\R}{\mathbb{R}}
\newcommand{\Z}{\mathbb{Z}}
\newcommand{\cH}{\mathcal{H}}
\newcommand{\cJ}{\mathcal{J}}
\newcommand{\cO}{\mathcal{O}}
\DeclareMathOperator{\dimorb}{dim_{orb}}
\DeclareMathOperator{\Var}{Var}
\DeclareMathOperator{\supp}{supp}
\begin{document}

\begin{center}
{\LARGE\bfseries Adaptive Nonparametric Estimation via Kernel Transport on Group Orbits: Oracle Inequalities and Minimax Rates}

\vspace{0.8cm}

{\large Jocelyn Nemb\'e}

\vspace{0.3cm}

{\itshape L.I.A.G.E\\
Institut National des Sciences de Gestion\\
BP 190, Libreville, Gabon}

\vspace{0.2cm}

{\itshape and}

\vspace{0.2cm}

{\itshape Modeling and Calculus Lab\\
ROOTS-INSIGHTS\\
Libreville, Gabon}

\vspace{0.3cm}

{\ttfamily jnembe@hotmail.com, jnembe@root-insights.com}

\vspace{0.5cm}

{\small December 2025}

\vspace{0.3cm}


\end{center}

\vspace{0.5cm}

\begin{abstract}
\noindent We develop a unified framework for nonparametric functional estimation based on kernel transport along orbits of discrete group actions, which we term \emph{Twin Spaces}. Given a base kernel $K$ and a group $G = \langle\varphi\rangle$ acting isometrically on the input space $E$, we construct a hierarchy of transported kernels $\{K_j\}_{j\geq 0}$ and a penalized model selection scheme satisfying a Kraft inequality. Our main contributions are threefold: (i) we establish non-asymptotic oracle inequalities for the penalized twin-kernel estimator with explicit constants; (ii) we introduce novel twin-regularity classes that capture smoothness along group orbits and prove that our estimator adapts to these classes; (iii) we show that the framework recovers classical minimax-optimal rates in the Euclidean setting while enabling improved rates when the target function exhibits orbital structure. The effective dimension $d_{\mathrm{eff}}$ governing the rates is characterized in terms of the quotient $G/L$, where $L$ is the subgroup preserving the base operation. Connections to wavelet methods, geometric quantization, and adaptive computation are discussed.

\vspace{0.3cm}

\noindent\textbf{Keywords:} Nonparametric estimation $\cdot$ Kernel methods $\cdot$ Group actions $\cdot$ Oracle inequalities $\cdot$ Adaptive estimation $\cdot$ Model selection

\vspace{0.2cm}

\noindent\textbf{MSC 2020:} Primary 62G05, 62G20; Secondary 46E22, 22F05
\end{abstract}

\vspace{0.5cm}

\tableofcontents

\newpage

\section{Introduction}

Nonparametric estimation of functions from noisy observations is a fundamental problem in statistics and machine learning. Kernel methods, originating from the seminal work of Parzen \cite{Parzen1962} and Nadaraya \cite{Nadaraya1964}, remain among the most widely used approaches due to their simplicity, flexibility, and well-understood theoretical properties. The classical theory, synthesized in Tsybakov \cite{Tsybakov2009} and Fan and Gijbels \cite{FanGijbels1996}, establishes that carefully chosen bandwidths lead to minimax-optimal rates over H\"older and Sobolev smoothness classes.

Despite their success, traditional kernel methods face several limitations. First, the choice of bandwidth typically requires knowledge of the unknown smoothness parameter, leading to a fundamental tension between theory and practice. Second, standard methods treat all directions in the input space uniformly, ignoring potential geometric structure that the target function may exhibit. Third, boundary effects can significantly degrade performance near the edges of the support. Adaptive procedures addressing the first limitation have been developed by Lepski \cite{Lepski1991,Lepski1997} and Goldenshluger and Lepski \cite{GoldenshlugerLepski2011}, while anisotropic methods \cite{Kerkyacharian2001,HoffmannLepski2002} tackle directional heterogeneity.

\subsection{Motivation: Beyond Uniform Smoothness}

Many functions encountered in applications exhibit anisotropic or directional regularity: they may be smooth along certain directions while varying rapidly along others. Classical examples include:

\begin{itemize}[leftmargin=*]
\item \textbf{Periodic functions:} A function on $\R$ that is smooth modulo a period $T$ has infinite classical smoothness along the orbit of translations by $T$, but potentially lower smoothness in general.
\item \textbf{Scale-invariant functions:} Functions satisfying $f(\lambda x) = \lambda^\alpha f(x)$ exhibit regularity along the orbits of the dilation group.
\item \textbf{Functions on homogeneous spaces:} When the input space carries a natural group action (e.g., the sphere under rotations), functions that are ``smooth along orbits'' form a natural class.
\end{itemize}

These observations suggest that the symmetry structure of the problem should inform the choice of estimation procedure. The present work develops this idea systematically.

\subsection{The Twin Space Perspective}

Our approach is founded on a simple but powerful observation: given a base kernel $K$ and a group $G$ acting on the input space $E$, we can generate a family of kernels by transporting $K$ along the group action. More precisely, for each $g \in G$, we define the \emph{twin kernel}
\begin{equation}\label{eq:twin_kernel_def}
K_g(x, y) := K(g^{-1} \cdot x, g^{-1} \cdot y).
\end{equation}
When $G = \langle\varphi\rangle$ is cyclic, we obtain a hierarchy $\{K_j\}_{j\geq 0}$ indexed by the integers, where $K_j$ corresponds to applying the generator $\varphi$ exactly $j$ times.

This construction has a natural interpretation in terms of \emph{twin operations}. Given a set $E$ with a binary operation $\diamond : E \times E \to E$ and a group $G$ acting on $E$, the twin operation $\odot_g$ is defined by
\begin{equation}\label{eq:twin_op_def}
x \odot_g y := g^{-1}(g \cdot x \diamond g \cdot y).
\end{equation}
The kernel transport \eqref{eq:twin_kernel_def} corresponds to taking $\diamond$ as the base kernel evaluation. The elements $g \in G$ for which $\odot_g = \diamond$ form a subgroup $L \subseteq G$, called the \emph{linear subgroup}, and the space of distinct twin operations is isomorphic to the quotient $G/L$.

\subsection{Main Contributions}

This paper makes the following contributions:

\begin{enumerate}[label=(\roman*),leftmargin=*]
\item \textbf{A unified framework.} We introduce the TwinKernel estimation framework, which combines: the orbital geometry induced by a discrete group action on the input space; a hierarchy of kernels obtained by transporting a reference kernel along orbits with varying bandwidths; and a penalized model selection procedure inspired by the Minimum Description Length (MDL) principle \cite{Rissanen1978,Barron1998}.

\item \textbf{Oracle inequalities.} We establish non-asymptotic oracle inequalities of the form
\begin{equation}\label{eq:oracle_intro}
\E \|\hat{m} - m\|_2^2 \leq C \inf_{j \in \cJ_n} \left\{ \|m_j - m\|_2^2 + \frac{\sigma^2 L(j)}{n} \right\} + \frac{C\sigma^2}{n},
\end{equation}
where $m_j$ is the projection of $m$ onto the space induced by $K_j$, $L(j)$ is a code length satisfying a Kraft inequality, and $C$ is an explicit constant.

\item \textbf{Twin-regularity classes.} We introduce a new scale of function spaces, the \emph{twin-H\"older classes} $\cH^s_{\mathrm{twin}}$, which measure regularity along group orbits rather than in the classical Euclidean sense. We prove that our estimator adapts to these classes, achieving the rate
\begin{equation}\label{eq:rate_intro}
\E \|\hat{m} - m\|_2^2 \leq C \left( \frac{\log n}{n} \right)^{\frac{2s}{2s + d_{\mathrm{eff}}}},
\end{equation}
where $d_{\mathrm{eff}}$ is an effective dimension determined by the group action.

\item \textbf{Characterization of effective dimension.} We show that $d_{\mathrm{eff}}$ can be computed from the quotient structure $G/L$, providing a geometric interpretation of the rates. In particular, when $G$ acts trivially (i.e., $G = L$), we recover the classical dimension $d$; when $G$ acts with large orbits, $d_{\mathrm{eff}}$ can be significantly smaller.

\item \textbf{Connections to classical methods.} We demonstrate that our framework unifies and extends several existing approaches: wavelet-based estimation via the dilation group \cite{Daubechies1992,Donoho1995}; local polynomial estimation as a limiting case \cite{Fan1993,FanGijbels1996}; and translation-invariant kernel methods when $G$ is the translation group.
\end{enumerate}

\subsection{Related Work}

Our work builds on several strands of the literature.

\paragraph{Kernel density and regression estimation.} The classical theory is developed in \cite{Parzen1962,Nadaraya1964,FanGijbels1996,Tsybakov2009}. Local polynomial methods, which automatically adapt at boundaries, were systematically studied by Fan \cite{Fan1993} and Fan and Gijbels \cite{FanGijbels1996}. Adaptive bandwidth selection via the Lepski method was introduced in \cite{Lepski1991,Lepski1997} and further developed in \cite{GoldenshlugerLepski2008,GoldenshlugerLepski2011}.

\paragraph{Model selection and oracle inequalities.} The penalized model selection approach, particularly the balance between approximation and complexity, was developed by Barron et al.\ \cite{Barron1999}, Massart \cite{Massart2007}, and Van de Geer \cite{VandeGeer2000}. Our oracle inequalities follow this tradition but are adapted to the hierarchical structure induced by group actions.

\paragraph{Wavelet and multiscale methods.} The connection between wavelets and group actions (specifically, the affine group) is classical; see \cite{Daubechies1992,Mallat2008}. Wavelet methods for nonparametric estimation were developed by Donoho and collaborators \cite{Donoho1995,DonohoJohnstone1994,DonohoJohnstone1998}. Our framework can be viewed as a generalization to arbitrary discrete groups.

\paragraph{Geometric and invariant estimation.} Estimation on manifolds and homogeneous spaces has been studied by Hendriks \cite{Hendriks1990}, Pelletier \cite{Pelletier2005}, and Kim and Zhou \cite{KimZhou2014}. Equivariant estimation and invariant kernels have received considerable attention in machine learning \cite{Kondor2018,Cohen2016}. Our approach differs in that we do not assume the input space is a manifold but rather that it carries a group action.

\paragraph{Anisotropic and directional smoothness.} Anisotropic H\"older and Besov spaces have been studied extensively \cite{Nikolskii1975,Triebel1983}. Adaptive estimation over such classes was addressed by Kerkyacharian et al.\ \cite{Kerkyacharian2001} and Hoffmann and Lepski \cite{HoffmannLepski2002}. Our twin-H\"older classes provide an alternative characterization based on group orbits.

\subsection{Organization}

The paper is organized as follows. Section~\ref{sec:framework} introduces the TwinKernel framework, including the basic definitions, the kernel hierarchy, and the notion of effective dimension. Section~\ref{sec:estimation} presents the estimation procedure and the penalized model selection scheme. Section~\ref{sec:results} states our main theoretical results: oracle inequalities, adaptation theorems, and minimax lower bounds. Section~\ref{sec:proofs} contains the proofs. Section~\ref{sec:examples} illustrates the framework with several examples and special cases. Section~\ref{sec:discussion} discusses extensions and open problems.

\subsection{Notation}

We use the following notation throughout. For a measurable function $f : E \to \R$, we write $\|f\|_p = (\int |f|^p \, d\mu)^{1/p}$ for the $L^p$ norm with respect to a reference measure $\mu$. The notation $a_n \lesssim b_n$ means $a_n \leq C b_n$ for some universal constant $C > 0$; $a_n \asymp b_n$ means $a_n \lesssim b_n$ and $b_n \lesssim a_n$. For a kernel $K$, we write $K_h(x,y) = h^{-d} K((x-y)/h)$ for the rescaled version with bandwidth $h$. The symbol $\langle\varphi\rangle$ denotes the cyclic group generated by $\varphi$. For a group $G$ acting on $E$ and $x \in E$, the orbit of $x$ is $G \cdot x = \{g \cdot x : g \in G\}$.

\section{The TwinKernel Framework}\label{sec:framework}

In this section, we introduce the mathematical framework underlying our approach. We begin with the abstract notion of twin operations, then specialize to the kernel setting.

\subsection{Twin Operations and Algebraic Preliminaries}

Let $E$ be a set equipped with a binary operation $\diamond : E \times E \to E$, and let $G$ be a group acting on $E$ from the left, denoted $(g, x) \mapsto g \cdot x$.

\begin{definition}[Twin Operation]\label{def:twin_op}
For each $g \in G$, the \emph{twin operation} $\odot_g : E \times E \to E$ is defined by
\begin{equation}\label{eq:twin_op}
x \odot_g y := g^{-1} \cdot (g \cdot x \diamond g \cdot y).
\end{equation}
\end{definition}

The twin operation $\odot_g$ can be interpreted as follows: transport $x$ and $y$ into the ``$g$-frame,'' perform the base operation $\diamond$, then transport the result back. When $g$ preserves the operation $\diamond$, the twin operation coincides with the original.

\begin{definition}[Linear Subgroup]\label{def:linear_subgroup}
The \emph{linear subgroup} $L \subseteq G$ consists of all elements that preserve the base operation:
\begin{equation}\label{eq:linear_subgroup}
L := \{g \in G : g \cdot (x \diamond y) = (g \cdot x) \diamond (g \cdot y), \; \forall x, y \in E\}.
\end{equation}
\end{definition}

The following proposition characterizes the space of twin operations.

\begin{proposition}[Fundamental Isomorphism]\label{prop:fund_iso}
Let $\pi : G \to \cO_{\mathrm{twin}}$ be the map $\pi(g) = \odot_g$, where $\cO_{\mathrm{twin}}$ denotes the set of twin operations equipped with composition. Then:
\begin{enumerate}[label=(\roman*)]
\item $\pi$ is a group homomorphism.
\item $\ker(\pi) = L$.
\item $\cO_{\mathrm{twin}} \cong G/L$.
\end{enumerate}
\end{proposition}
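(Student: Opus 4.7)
The plan is to reduce all three assertions to a single coset-characterization lemma: $\odot_g = \odot_h$ if and only if $gh^{-1} \in L$. Given this, (ii) follows by specialization, (i) becomes tautological once the group law on $\cO_{\mathrm{twin}}$ is transported through $\pi$, and (iii) is the first isomorphism theorem.

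First, I would verify the coset characterization by unfolding Definition~\ref{def:twin_op}. The identity $\odot_g = \odot_h$ reads
\[ g^{-1}\cdot(g\cdot x \diamond g\cdot y) \;=\; h^{-1}\cdot(h\cdot x \diamond h\cdot y) \quad \text{for all } x, y \in E. \]
Setting $k = gh^{-1}$ and substituting $u = h\cdot x$, $v = h\cdot y$ (a bijection of $E\times E$ with itself, since $h$ acts bijectively), this is equivalent to
\[ k\cdot(u \diamond v) \;=\; (k\cdot u) \diamond (k\cdot v) \quad \text{for all } u, v \in E, \]
which is precisely the membership condition $k \in L$ from Definition~\ref{def:linear_subgroup}.

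With the lemma in hand, (ii) is immediate: specializing to $h = e$ shows that $\pi(g) = \diamond = \pi(e)$ iff $g \in L$, so $\ker\pi = L$. For (i), I would transport the group law from $G$ by declaring $\odot_g \star \odot_h := \odot_{gh}$; the coset characterization makes this well-defined as soon as the left and right $L$-cosets of every element coincide, i.e.\ $L \triangleleft G$. Under this convention $\pi$ is by construction a surjective group homomorphism with kernel $L$, and (iii) then follows from the first isomorphism theorem: $\cO_{\mathrm{twin}} = \pi(G) \cong G/\ker\pi = G/L$.

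The main obstacle I anticipate is the well-definedness of the transported product in (i), which requires $L$ to be normal in $G$. In the paper's principal setting $G = \langle\varphi\rangle$ this is automatic, since every subgroup of a cyclic (hence abelian) group is normal; in a fully general non-abelian setting, normality does not follow from the defining identity for $L$ on its own and would have to be added as an implicit hypothesis or derived from additional structure on the $G$-action. Once this point is settled, the remainder of the argument is routine.
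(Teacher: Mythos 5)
Your proposal is correct, and it takes a cleaner route than the paper's own proof. The paper proves (i) by the direct computation $x \odot_{gh} y = h^{-1}\cdot\bigl((h\cdot x) \odot_g (h\cdot y)\bigr)$ and then asserts the homomorphism property ``after appropriate identification,'' before reading off (ii) from the definitions and invoking the first isomorphism theorem for (iii). You instead isolate the coset-characterization lemma $\odot_g = \odot_h \iff gh^{-1}\in L$, which immediately gives (ii) and, after transporting the product via $\odot_g \star \odot_h := \odot_{gh}$, gives (i) and (iii). What your approach buys is precisely the point the paper's ``appropriate identification'' glosses over: the transported product is well defined only if $g'\in Lg$, $h'\in Lh$ imply $g'h' \in Lgh$, i.e.\ only if $L \triangleleft G$; the paper's displayed identity shows that $\odot_{gh}$ depends on $\odot_g$ and on the element $h$, but not that it depends on $h$ only through $\odot_h$, which is exactly the well-definedness issue. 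Since Proposition~\ref{prop:fund_iso} is stated for an arbitrary group $G$ (not only the cyclic $G=\langle\varphi\rangle$ used later, where normality is automatic because $G$ is abelian), your explicit flagging of the normality hypothesis is a genuine improvement rather than pedantry; alternatively, without normality your lemma still identifies $\cO_{\mathrm{twin}}$ with the coset space of $L$ as a set, which is all that the statistical applications of $G/L$ (counting twin operations, effective dimension) actually use. One small bookkeeping point: your characterization puts the fibers of $\pi$ at right cosets $Lg$, whereas $G/L$ conventionally denotes left cosets; under normality these coincide, so nothing is lost, but it is worth a clause in a final write-up.
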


\begin{proof}
(i) For any $g, h \in G$ and $x, y \in E$, we have
\begin{align*}
x \odot_{gh} y &= (gh)^{-1} \cdot ((gh) \cdot x \diamond (gh) \cdot y) \\
&= h^{-1} \cdot g^{-1} \cdot (g \cdot (h \cdot x) \diamond g \cdot (h \cdot y)) \\
&= h^{-1} \cdot ((h \cdot x) \odot_g (h \cdot y)).
\end{align*}
This shows that $\pi(gh)$ is determined by $\pi(g)$ and $\pi(h)$, establishing the homomorphism property after appropriate identification.

(ii) By definition, $g \in \ker(\pi)$ if and only if $\odot_g = \diamond$, which is equivalent to $g \cdot (x \diamond y) = (g \cdot x) \diamond (g \cdot y)$ for all $x, y$. This is precisely the defining condition for $L$.

(iii) This follows from the first isomorphism theorem.
\end{proof}

This proposition has a crucial implication: the ``size'' of the space of twin operations is determined by the quotient $G/L$. When $L$ is large (i.e., many group elements preserve the operation), there are few distinct twin operations; when $L$ is small, there are many.

\subsection{Kernel Transport and the Twin Hierarchy}

We now specialize to the setting of kernel methods. Let $(E, d)$ be a metric space equipped with a reference measure $\mu$, and let $G = \langle\varphi\rangle$ be a cyclic group acting isometrically on $E$.

\begin{assumption}[Base Kernel]\label{ass:kernel}
The base kernel $K : \R_+ \to \R_+$ satisfies:
\begin{enumerate}[label=(K\arabic*)]
\item $K$ is bounded: $\|K\|_\infty < \infty$.
\item $K$ has compact support: $\supp(K) \subseteq [0, 1]$.
\item $K$ integrates to one: $\int K(u) \, du = 1$.
\item $K$ is Lipschitz continuous.
\end{enumerate}
\end{assumption}

\begin{definition}[Twin-Kernel Hierarchy]\label{def:twin_hierarchy}
Let $\{h_j\}_{j \geq 0}$ be a decreasing sequence of bandwidths with $h_j \to 0$. The \emph{twin-kernel hierarchy} $\{K_j\}_{j \geq 0}$ is defined by
\begin{equation}\label{eq:twin_hierarchy}
K_j(x, y) := K\left( \frac{d(\varphi^{-j} \cdot x, \varphi^{-j} \cdot y)}{h_j} \right), \quad j \geq 0.
\end{equation}
\end{definition}

The kernel $K_j$ operates in a ``transported'' space: points $x$ and $y$ are first mapped by $\varphi^{-j}$, their distance is computed, and then the base kernel $K$ is applied with bandwidth $h_j$. This construction has several important properties.

\begin{proposition}[Properties of the Twin-Kernel Hierarchy]\label{prop:twin_properties}
Under Assumption~\ref{ass:kernel} and assuming $\varphi$ acts isometrically:
\begin{enumerate}[label=(\roman*)]
\item Each $K_j$ is symmetric: $K_j(x, y) = K_j(y, x)$.
\item Each $K_j$ is positive semi-definite if $K$ is.
\item $K_j(x, y) = K_0(\varphi^{-j} \cdot x, \varphi^{-j} \cdot y)$ with bandwidth $h_j$.
\item If $\varphi^{-j} = \varphi^{-k}$ on $E$, then $K_j = K_k$ (up to bandwidth rescaling).
\end{enumerate}
\end{proposition}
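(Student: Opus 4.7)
The plan is to verify each of the four items directly from Definition~\ref{def:twin_hierarchy}, exploiting the fact that all of them are structural consequences of the identity
\[
K_j(x,y) = K\!\left(\tfrac{d(\varphi^{-j}\cdot x,\, \varphi^{-j}\cdot y)}{h_j}\right),
\]
combined with the hypothesis that $\varphi$ acts isometrically and the properties (K1)--(K4) of the base kernel. No analytic estimate is needed; the entire argument is algebraic/definitional.

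For (i), I would simply use symmetry of the metric: $d(u,v) = d(v,u)$ implies $K_j(x,y) = K_j(y,x)$ since $K$ depends only on the distance argument. For (iii), I would unpack $K_0(\varphi^{-j}\cdot x, \varphi^{-j}\cdot y)$ using the definition with $j=0$, noting that $\varphi^{0}$ is the identity, so $K_0(u,v) = K(d(u,v)/h_0)$; substituting $u = \varphi^{-j}\cdot x$ and $v=\varphi^{-j}\cdot y$ recovers the right-hand side of \eqref{eq:twin_hierarchy} provided the bandwidth $h_0$ is replaced by $h_j$---precisely the parenthetical qualifier in the statement. For (iv), if $\varphi^{-j}$ and $\varphi^{-k}$ agree as maps $E\to E$, then $d(\varphi^{-j}\cdot x, \varphi^{-j}\cdot y) = d(\varphi^{-k}\cdot x, \varphi^{-k}\cdot y)$ for every $x,y$, and hence $K_j$ and $K_k$ can differ only through the choice of bandwidth, which is the claimed rescaling.

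The only item requiring a brief argument is (ii). Here I would use the pullback principle: for any finite collection $x_1,\dots,x_n \in E$ and scalars $c_1,\dots,c_n \in \R$, set $y_i := \varphi^{-j}\cdot x_i$. Then
\[
\sum_{i,k=1}^n c_i c_k K_j(x_i,x_k) = \sum_{i,k=1}^n c_i c_k K\!\left(\tfrac{d(y_i,y_k)}{h_j}\right),
\]
which is the quadratic form associated with the kernel $(u,v)\mapsto K(d(u,v)/h_j)$ evaluated at the transported points $\{y_i\}$; by hypothesis this kernel is positive semi-definite, so the sum is nonnegative.

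The main (mild) obstacle is conceptual rather than technical: one must be careful in (iii) about the distinction between ``the same kernel'' and ``the same kernel up to bandwidth,'' and in (iv) about the fact that $\varphi^{-j} = \varphi^{-k}$ is an equality of maps on $E$ (not of abstract group elements), which is precisely why the quotient structure of Proposition~\ref{prop:fund_iso} enters---different powers of $\varphi$ may induce the same transformation of the input space, and the twin hierarchy respects this identification.
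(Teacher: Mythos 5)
Your proposal is correct and follows essentially the same route as the paper, whose proof is only a terse sketch ("follows from the corresponding properties of $K$ and the isometry assumption"); you simply spell out the definitional verifications, including the explicit pullback argument for positive semi-definiteness in (ii). No gaps.
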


\begin{proof}
Properties (i) and (ii) follow from the corresponding properties of $K$ and the isometry assumption. Property (iii) is immediate from the definition. Property (iv) follows from the fact that the kernel depends on $\varphi^{-j}$ only through its action on pairs $(x, y)$.
\end{proof}

\subsection{Quasi-Invariance and the Design Measure}

For our statistical results, we require that the design measure $P_X$ (the distribution of the covariates) is compatible with the group action in a suitable sense.

\begin{assumption}[Quasi-Invariance]\label{ass:quasi_inv}
The design measure $P_X$ is \emph{quasi-invariant} under $G$: there exist constants $0 < c_1 \leq c_2 < \infty$ such that
\begin{equation}\label{eq:quasi_inv}
c_1 P_X(A) \leq P_X(\varphi^n \cdot A) \leq c_2 P_X(A)
\end{equation}
for all Borel sets $A \subseteq E$ and all $n \in \Z$.
\end{assumption}

This assumption ensures that the group action does not concentrate or disperse probability mass excessively. When $c_1 = c_2 = 1$, the measure $P_X$ is $G$-invariant.

\begin{example}[Lebesgue Measure on the Torus]\label{ex:torus}
Let $E = \R/\Z$ (the circle) and $\varphi : x \mapsto x + \alpha \mod 1$ for some $\alpha \in (0, 1)$. The Lebesgue measure on $E$ is $G$-invariant, so Assumption~\ref{ass:quasi_inv} holds with $c_1 = c_2 = 1$.
\end{example}

\begin{example}[Dilation on $\R_+$]\label{ex:dilation}
Let $E = \R_+$ and $\varphi : x \mapsto 2x$. The measure $d\mu(x) = dx/x$ (Haar measure on the multiplicative group) is invariant. The Lebesgue measure $dx$ is quasi-invariant with $c_1 = 1/2$ and $c_2 = 2$.
\end{example}

\subsection{Effective Dimension}

A key quantity in our analysis is the \emph{effective dimension}, which captures how the group action reduces the complexity of the estimation problem.

\begin{definition}[Effective Dimension]\label{def:eff_dim}
Let $G = \langle\varphi\rangle$ act on $(E, \mu)$ with linear subgroup $L \subseteq G$. The \emph{effective dimension} is
\begin{equation}\label{eq:eff_dim}
d_{\mathrm{eff}} := \dim(E) - \dimorb(G/L),
\end{equation}
where $\dimorb(G/L)$ is the \emph{orbital dimension}, defined below.
\end{definition}

To make this precise, we need to quantify the ``size'' of orbits.

\begin{definition}[Orbital Dimension]\label{def:orb_dim}
For a cyclic group $G = \langle\varphi\rangle$ acting on $E$, define the \emph{orbital dimension} as
\begin{equation}\label{eq:orb_dim}
\dimorb(G/L) := \lim\sup_{r \to 0} \frac{\log N(r)}{\log(1/r)},
\end{equation}
where $N(r)$ is the minimum number of balls of radius $r$ needed to cover a typical orbit $G \cdot x$ for $\mu$-almost every $x \in E$.
\end{definition}

When orbits are ``large'' (high orbital dimension), the effective dimension $d_{\mathrm{eff}}$ is reduced, leading to faster rates. When the action is trivial ($G = L$), we have $\dimorb = 0$ and $d_{\mathrm{eff}} = \dim(E)$, recovering the classical setting.

\begin{proposition}[Bounds on Effective Dimension]\label{prop:eff_dim_bounds}
Let $d = \dim(E)$. Then:
\begin{enumerate}[label=(\roman*)]
\item $0 \leq d_{\mathrm{eff}} \leq d$.
\item $d_{\mathrm{eff}} = d$ if and only if $G = L$ (trivial action on the operation).
\item $d_{\mathrm{eff}} = 0$ if and only if the orbits fill $E$ (transitive action with $L = \{e\}$).
\end{enumerate}
\end{proposition}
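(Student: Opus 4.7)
The plan is to reduce each claim to a covering-number estimate for orbits, combined with the isomorphism $\mathcal{O}_{\mathrm{twin}} \cong G/L$ from Proposition~\ref{prop:fund_iso}. The natural reading of Definition~\ref{def:orb_dim} (consistent with its subscript $G/L$) is that $N(r)$ counts balls needed to cover an \emph{effective} orbit $(G/L)\cdot x$, since elements of $L$ act trivially at the level of operations; I would either adopt this reading explicitly or verify that the literal formula $G\cdot x$ gives the same number up to a multiplicative constant under quasi-invariance.

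For (i), the lower bound $\deff \geq 0$ is equivalent to $\dimorb(G/L) \leq d$, which I would obtain by noting that any orbit is a subset of $E$, so $N(r) \leq N_E(r) \lesssim r^{-d}$ under the standing assumption $\dim(E)=d$; taking the $\log/\log$ ratio gives the bound. The upper bound $\deff \leq d$ reduces to $\dimorb(G/L) \geq 0$, which is immediate from $N(r) \geq 1$.

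For (ii), the forward direction uses Proposition~\ref{prop:fund_iso}: $G = L$ makes $G/L$ trivial, hence the effective orbit of every point is a singleton, giving $N(r) \equiv 1$ and $\dimorb = 0$, so $\deff = d$. For the converse I would argue contrapositively: if $G \neq L$, pick $g \in G \setminus L$; isometry of the action, combined with Assumption~\ref{ass:quasi_inv} to rule out mass collapse, ensures that $\{g^k\cdot x\}_{k\in\Z}$ is a discrete set in $E$ for $\mu$-a.e.\ $x$, so $N(r)\to\infty$ as $r\to 0$ at a polynomial rate, yielding $\dimorb(G/L) > 0$ and $\deff < d$. For (iii), $\deff = 0$ is equivalent to $\dimorb(G/L) = d$. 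If the action is transitive and $L=\{e\}$, then $(G/L)\cdot x = G\cdot x$ exhausts $E$ (up to $\mu$-null), so $N(r)\asymp N_E(r)\asymp r^{-d}$. Conversely, saturation of $\dimorb \leq d$ forces orbits to attain the maximal upper Minkowski dimension of $E$; together with cyclicity of $G$ and quasi-invariance, this forces $G\cdot x$ to be $\mu$-essentially dense and $L$ to be trivial, since any nontrivial $L$ would strictly shrink the quotient orbit and drop $\dimorb$ below $d$.

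The main obstacle is the converse direction of (iii): a cyclic orbit is at most countable, so ``orbits fill $E$'' must be interpreted as $\mu$-essential density rather than literal set equality whenever $E$ is uncountable. Translating maximal upper Minkowski dimension into density requires mild regularity of $\mu$ (for instance Ahlfors-regularity) so that covering and Hausdorff dimensions agree and no low-dimensional proper subset can support the orbit while still attaining covering exponent $d$; this should probably be recorded as a side hypothesis or folded into Definition~\ref{def:orb_dim}. A secondary subtlety, worth a one-line remark in the proof, is the notational slippage between the subscript $G/L$ and the orbit $G\cdot x$ appearing in Definition~\ref{def:orb_dim}: all three claims become clean under the quotient-orbit reading adopted above, and equivalence with the literal reading holds whenever the $L$-orbits are uniformly bounded.
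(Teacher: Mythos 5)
Your treatment of (i) and of the forward implications in (ii) and (iii) coincides with the paper's proof, which in fact establishes only those forward directions; your explicit adoption of the quotient-orbit reading of Definition~\ref{def:orb_dim} is an improvement rather than a deviation, since the paper's step ``$G=L$ implies the orbits are single points'' is only correct under that reading (for a translation-invariant kernel one has $L=G$ while the literal orbit $G\cdot x$ is infinite).

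The genuine gap is your converse of (ii). You argue that $g\in G\setminus L$ makes $\{g^k\cdot x\}$ discrete and hence $N(r)\to\infty$ ``at a polynomial rate'', so that $\dimorb(G/L)>0$ and $\deff<d$. Discreteness gives no polynomial rate. For the dyadic dilation action of Proposition~\ref{prop:wavelets}, the orbit $\{2^k x\}$ meets any bounded region in a geometric sequence, so $N(r)\asymp\log(1/r)$ and $\dimorb(G/L)=0$ even though $L$ is trivial; the paper itself records $\deff=1=\dim(E)$ in that example, which is a direct counterexample to the implication you are trying to prove (and shows the ``only if'' half of (ii) cannot be obtained this way, if at all, without further hypotheses such as infinite orbits with positive box dimension). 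A finite-order generator, e.g.\ a rational rotation of the circle, gives another failure: orbits are finite, $N(r)$ stays bounded, $\deff=d$, yet $G\neq L$. So the contrapositive step collapses. For the converse of (iii) you yourself flag that one needs Ahlfors-type regularity of $\mu$ and a $\mu$-essential-density reading of ``orbits fill $E$''; as written this is an unproved side hypothesis, not a proof. Since the paper's proof silently omits both converses, the accurate assessment is that only the forward implications are established, by essentially the same covering-number argument you give, and your additional converse arguments do not close the biconditionals.
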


\begin{proof}
(i) The orbital dimension satisfies $0 \leq \dimorb(G/L) \leq d$ by construction, which gives the bounds on $d_{\mathrm{eff}}$.

(ii) If $G = L$, then the orbits are trivial (single points), so $N(r) = 1$ for all $r > 0$ and $\dimorb = 0$.

(iii) If the action is transitive, then $G \cdot x = E$ for all $x$, and $N(r)$ is the covering number of $E$, giving $\dimorb = d$.
\end{proof}

\subsection{Connection to Classical Kernel Hierarchies}

The twin-kernel hierarchy generalizes several classical constructions.

\paragraph{Local Polynomial Kernels.} In local polynomial estimation of order $r$, one uses kernels $K_{j,r}$ derived from orthogonal polynomials. These can be viewed as twin kernels with respect to a ``polynomial action'' on the local approximation space.

\paragraph{Wavelet Bases.} Wavelets arise from the affine group $G = \{(a, b) : a > 0, b \in \R\}$ acting on $L^2(\R)$ by $(a, b) \cdot f(x) = a^{-1/2} f((x - b)/a)$. The twin-kernel at scale $j$ corresponds to the projection onto the wavelet subspace at that scale; see \cite{Daubechies1992,Mallat2008}.

\paragraph{Translation-Invariant Kernels.} When $G$ is the translation group and $\varphi : x \mapsto x + \delta$ for fixed $\delta$, the twin kernels $K_j$ are simply shifted versions of $K_0$. If additionally $K$ is translation-invariant, all $K_j$ coincide, and $L = G$.

These connections are formalized in Section~\ref{sec:examples}.

\section{Estimation and Model Selection}\label{sec:estimation}

We now present the TwinKernel estimation procedure and the penalized model selection scheme.

\subsection{The Statistical Model}

We observe i.i.d.\ pairs $(X_1, Y_1), \ldots, (X_n, Y_n)$ satisfying the regression model
\begin{equation}\label{eq:model}
Y_i = m(X_i) + \varepsilon_i, \quad i = 1, \ldots, n,
\end{equation}
where:
\begin{itemize}[leftmargin=*]
\item $X_i \in E$ are covariates with distribution $P_X$ satisfying Assumption~\ref{ass:quasi_inv};
\item $m : E \to \R$ is the unknown regression function;
\item $\varepsilon_i$ are centered errors with $\E[\varepsilon_i | X_i] = 0$ and $\Var(\varepsilon_i | X_i) \leq \sigma^2$.
\end{itemize}

\subsection{The TwinKernel Estimator}

For each level $j$ in the hierarchy, we define a local estimator.

\begin{definition}[Level-$j$ Estimator]\label{def:level_j_est}
The TwinKernel estimator at level $j$ is the Nadaraya--Watson type estimator
\begin{equation}\label{eq:NW_est}
\hat{m}_j(x) := \frac{\sum_{i=1}^n K_j(x, X_i) Y_i}{\sum_{i=1}^n K_j(x, X_i) + \eta_n},
\end{equation}
where $\eta_n > 0$ is a small regularization parameter (e.g., $\eta_n = n^{-2}$) ensuring the denominator is bounded away from zero.
\end{definition}

\begin{remark}
The regularization $\eta_n$ is a technical device to handle regions of low design density. Under standard assumptions on $P_X$, the estimator is insensitive to the precise choice of $\eta_n$ as long as $\eta_n \to 0$ sufficiently slowly; see \cite{Tsybakov2009}.
\end{remark}

\subsection{Local Polynomial Extension}

For improved boundary behavior, we can extend the construction to local polynomial fitting in twin space. Fix a polynomial degree $r \geq 0$, and define the local polynomial TwinKernel estimator as the minimizer of
\begin{equation}\label{eq:locpoly}
\sum_{i=1}^n K_j(x, X_i) \left( Y_i - \sum_{k=0}^r a_k \psi_k(X_i - x) \right)^2,
\end{equation}
where $\{\psi_k\}$ is a basis for polynomials of degree at most $r$ in the transported coordinates. The estimator $\hat{m}_j(x)$ is then the constant term $\hat{a}_0$ of the minimizer.

This extension inherits the automatic boundary adaptation property of local polynomial estimators \cite{Fan1993,FanGijbels1996}.

\subsection{Penalized Model Selection}

The key challenge is to select the appropriate level $j$ from the hierarchy. We employ a penalized empirical risk minimization approach, following the general framework of \cite{Barron1999,Massart2007}.

\begin{definition}[Empirical Risk]\label{def:emp_risk}
The empirical risk at level $j$ is
\begin{equation}\label{eq:emp_risk}
\gamma_n(j) := \frac{1}{n} \sum_{i=1}^n (Y_i - \hat{m}_j(X_i))^2.
\end{equation}
\end{definition}

\begin{definition}[Penalty Function]\label{def:penalty}
Let $\{L(j)\}_{j \in \cJ_n}$ be a sequence of positive numbers satisfying the \emph{Kraft inequality}:
\begin{equation}\label{eq:Kraft}
\sum_{j \in \cJ_n} 2^{-L(j)} \leq 1.
\end{equation}
The penalty function is
\begin{equation}\label{eq:penalty}
\mathrm{pen}(j) := \frac{\lambda}{n} L(j),
\end{equation}
where $\lambda > 0$ is a tuning constant.
\end{definition}

A natural choice is $L(j) = \log_2(1 + j) + 1$, which satisfies \eqref{eq:Kraft} since $\sum_{j \geq 0} 2^{-\log_2(1+j)-1} = \sum_{j \geq 0} \frac{1}{2(1+j)} < \infty$.

\begin{definition}[Selected Level and Final Estimator]\label{def:final_est}
The selected level is
\begin{equation}\label{eq:selected_level}
\hat{j} := \arg\min_{j \in \cJ_n} \{\gamma_n(j) + \mathrm{pen}(j)\},
\end{equation}
and the final TwinKernel estimator is $\hat{m} := \hat{m}_{\hat{j}}$.
\end{definition}

\subsection{Choice of the Index Set}

The index set $\cJ_n$ determines the range of scales considered. A natural choice is
\begin{equation}\label{eq:index_set}
\cJ_n := \{0, 1, \ldots, J_n\}, \quad \text{where } J_n = \lfloor c \log n \rfloor
\end{equation}
for some constant $c > 0$. This ensures that:
\begin{enumerate}[label=(\roman*)]
\item The finest scale $h_{J_n}$ decreases polynomially in $n$.
\item The number of scales $|\cJ_n| = O(\log n)$ grows slowly.
\item The penalty remains of order $O((\log n)/n)$, which is negligible compared to the variance.
\end{enumerate}

\subsection{Practical Implementation}

Algorithm~\ref{alg:twinkernel} summarizes the estimation procedure.

\begin{algorithm}[H]
\caption{TwinKernel Estimation}
\label{alg:twinkernel}
\begin{algorithmic}[1]
\REQUIRE Data $(X_1, Y_1), \ldots, (X_n, Y_n)$; base kernel $K$; group generator $\varphi$; bandwidth sequence $\{h_j\}$; penalty constant $\lambda$.
\ENSURE Estimator $\hat{m}$.
\STATE Set $\cJ_n = \{0, 1, \ldots, \lfloor c \log n \rfloor\}$.
\FOR{$j \in \cJ_n$}
    \FOR{$i = 1, \ldots, n$}
        \STATE Compute $\hat{m}_j(X_i)$ using \eqref{eq:NW_est}.
    \ENDFOR
    \STATE Compute $\gamma_n(j) = \frac{1}{n} \sum_{i=1}^n (Y_i - \hat{m}_j(X_i))^2$.
    \STATE Compute $\mathrm{pen}(j) = \frac{\lambda}{n}(\log_2(1 + j) + 1)$.
    \STATE Compute $C_n(j) = \gamma_n(j) + \mathrm{pen}(j)$.
\ENDFOR
\STATE Set $\hat{j} = \arg\min_{j \in \cJ_n} C_n(j)$.
\RETURN $\hat{m} = \hat{m}_{\hat{j}}$.
\end{algorithmic}
\end{algorithm}

\section{Main Results}\label{sec:results}

We now state our main theoretical results. All proofs are deferred to Section~\ref{sec:proofs}.

\subsection{Assumptions}

We collect the standing assumptions.

\begin{assumption}[Kernel]\label{ass:kernel_main}
The base kernel $K$ satisfies Assumption~\ref{ass:kernel}.
\end{assumption}

\begin{assumption}[Design]\label{ass:design}
The design measure $P_X$ has a density $f_X$ with respect to $\mu$ satisfying:
\begin{enumerate}[label=(\roman*)]
\item $f_X$ is bounded away from zero and infinity: $0 < f_{\min} \leq f_X(x) \leq f_{\max} < \infty$.
\item $P_X$ is quasi-invariant under $G$ (Assumption~\ref{ass:quasi_inv}).
\end{enumerate}
\end{assumption}

\begin{assumption}[Errors]\label{ass:errors}
The errors $\varepsilon_i$ satisfy:
\begin{enumerate}[label=(E\arabic*)]
\item $\E[\varepsilon_i | X_i] = 0$.
\item $\E[\varepsilon_i^2 | X_i] \leq \sigma^2$ almost surely.
\item $\varepsilon_i$ is sub-Gaussian conditionally on $X_i$: $\E[e^{t\varepsilon_i} | X_i] \leq e^{\sigma^2 t^2/2}$ for all $t \in \R$.
\end{enumerate}
\end{assumption}

\begin{assumption}[Bandwidth Sequence]\label{ass:bandwidth}
The bandwidth sequence satisfies $h_j = h_0 \cdot \rho^j$ for some $h_0 > 0$ and $\rho \in (0, 1)$.
\end{assumption}

\subsection{Oracle Inequality}

Our first main result is a non-asymptotic oracle inequality.

\begin{theorem}[Oracle Inequality]\label{thm:oracle}
Under Assumptions~\ref{ass:kernel_main}--\ref{ass:bandwidth}, there exist constants $C, \lambda_0 > 0$ depending only on $K$, $f_{\min}$, $f_{\max}$, $c_1$, $c_2$, and $\sigma$ such that for $\lambda \geq \lambda_0$ and $n$ sufficiently large,
\begin{equation}\label{eq:oracle}
\E \|\hat{m} - m\|_2^2 \leq C \inf_{j \in \cJ_n} \left\{ \|m_j - m\|_2^2 + \frac{\sigma^2 L(j)}{n} \right\} + \frac{C\sigma^2}{n},
\end{equation}
where $m_j(x) := \E[\hat{m}_j(x)]$ is the deterministic approximation of $m$ at level $j$.
\end{theorem}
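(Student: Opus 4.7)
The plan is to follow the classical penalized model selection argument of Barron--Birg\'e--Massart \cite{Barron1999,Massart2007}, adapted to the twin-kernel scale hierarchy. Starting from the minimality of $\hat{j}$, namely $\gamma_n(\hat{j}) + \mathrm{pen}(\hat{j}) \le \gamma_n(j^*) + \mathrm{pen}(j^*)$ for an arbitrary comparison level $j^* \in \cJ_n$, I would substitute $Y_i = m(X_i) + \varepsilon_i$ and cancel $\|\varepsilon\|_n^2$ to obtain the contrast decomposition
\[
\|\hat{m}_{\hat{j}} - m\|_n^2 \le \|\hat{m}_{j^*} - m\|_n^2 + \mathrm{pen}(j^*) - \mathrm{pen}(\hat{j}) + 2\langle \varepsilon,\, \hat{m}_{\hat{j}} - \hat{m}_{j^*}\rangle_n,
\]
where $\|\cdot\|_n$ and $\langle\cdot,\cdot\rangle_n$ denote the empirical $L^2$ norm and inner product. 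A further bias--variance split $\|\hat{m}_j - m\|_n^2 = \|\hat{m}_j - m_j\|_n^2 + \|m_j - m\|_n^2 + 2\langle \hat{m}_j - m_j,\, m_j - m\rangle_n$ handles the cross term via its zero conditional expectation given $X$.

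The crux is uniform control of the noise term $\langle \varepsilon,\, \hat{m}_{\hat{j}} - \hat{m}_{j^*}\rangle_n$. Conditionally on the design, $\hat{m}_j$ is (approximately) linear in $Y$, so $\hat{m}_j - m_j = W_j \varepsilon$ for a deterministic smoother $W_j$ and $\langle\varepsilon, W_j\varepsilon\rangle/n$ is a quadratic form in a sub-Gaussian vector. Hanson--Wright / Bernstein-type tails bound each pair $(j,j')$, and the Kraft inequality $\sum_{j} 2^{-L(j)} \le 1$ (Definition~\ref{def:penalty}) absorbs the union bound, yielding, with probability $\ge 1 - e^{-t}$ uniformly over $\hat{j}, j^* \in \cJ_n$,
\[
2\langle \varepsilon,\, \hat{m}_{\hat{j}} - \hat{m}_{j^*}\rangle_n \le \tfrac{1}{2}\|\hat{m}_{\hat{j}} - \hat{m}_{j^*}\|_n^2 + \frac{c\sigma^2}{n}\bigl(L(\hat{j}) + L(j^*) + t\bigr).
\]
Choosing $\lambda_0 \ge 2c$ allows $\mathrm{pen}(\hat{j}) = \lambda L(\hat{j})/n$ to absorb the $L(\hat{j})$ contribution on the right, and moving $\tfrac{1}{2}\|\hat{m}_{\hat{j}} - m\|_n^2$ to the left (via the triangle inequality against $\hat{m}_{j^*} - m$) yields an empirical-norm oracle bound with the desired functional form.

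To transfer to the population $L^2(P_X)$ norm, I would exploit Assumption~\ref{ass:design}(i), which gives $f_{\min} \le f_X \le f_{\max}$, together with the uniform boundedness of $\hat{m}_j$ and $m_j$ guaranteed by the $\eta_n$-regularized denominator in \eqref{eq:NW_est}. Standard Bernstein concentration for bounded functions then gives $\|f\|_n^2 \asymp \|f\|_2^2$ on an event of probability at least $1 - n^{-2}$ uniformly over the class of interest; the complement contributes only to the additive $C\sigma^2/n$ residual. Integrating the tail bound in $t$ produces the stated expectation inequality, with the infimum appearing because $j^*$ was arbitrary. The main obstacle will be the uniform control of the noise term at the data-dependent selector $\hat{j}$: a naive union over all $(j, j')$ would cost a $\log|\cJ_n|$ factor in the penalty, whereas the Kraft-weighted peeling argument of Massart trades this for the explicit $L(j)$ dependence, which is essential if the penalty is to remain of order $(\log n)/n$. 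A secondary technicality is that Nadaraya--Watson is only approximately linear in $Y$ because the denominator is random; I would handle this by conditioning on the high-probability event that $\sum_i K_j(x, X_i)$ stays within a constant factor of its expectation, which by Bernstein fails with probability $o(n^{-1})$ uniformly over $x$ and $j \in \cJ_n$ thanks to $|\cJ_n| = O(\log n)$.
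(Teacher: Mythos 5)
Your proposal is correct in outline and follows essentially the same route as the paper: both are the Barron--Birg\'e--Massart penalized model selection argument \cite{Barron1999,Massart2007}, starting from the minimality of $\hat{j}$, decomposing the contrast, controlling the noise cross term uniformly over levels, and letting the Kraft inequality \eqref{eq:Kraft} absorb the union bound so that the penalty $\lambda L(j)/n$ (for $\lambda \geq \lambda_0$) swallows the stochastic term; in fact you supply several details (the conditional linearity of the smoother, the Hanson--Wright control of the quadratic form, the explicit absorption constant) that the paper's proof only gestures at by citation. Two caveats on your version. First, the theorem centers at $m_j(x) = \E[\hat{m}_j(x)]$, whereas conditionally on the design the natural centering is $\E[\hat{m}_j(x)\mid X_1,\ldots,X_n]$; the discrepancy between the two is exactly what Lemma~\ref{lem:bias} and your high-probability event on the denominator of \eqref{eq:NW_est} are for, and it should be stated explicitly rather than folded into ``approximately linear.'' Second, your transfer from $\|\cdot\|_n$ to $\|\cdot\|_2$ cannot rest on ``standard Bernstein for bounded functions'' alone: the function $\hat{m}_{\hat{j}} - m$ is built from the same sample at which the empirical norm is evaluated, so a fixed-function concentration bound does not apply; one needs uniformity over a class containing all realizations of $\hat{m}_j - m$, $j \in \cJ_n$, which is precisely the role of the entropy bound (Lemma~\ref{lem:entropy}) or an equivalent chaining/net argument. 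With those two points made explicit, your argument matches the paper's intended proof.
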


The oracle inequality \eqref{eq:oracle} has a natural interpretation:
\begin{itemize}[leftmargin=*]
\item The term $\|m_j - m\|_2^2$ is the squared bias at level $j$, measuring how well the twin-kernel at that level can approximate the true function.
\item The term $\sigma^2 L(j)/n$ is the variance penalty, which increases with $j$ (finer scales have higher variance).
\item The infimum represents the \emph{oracle choice}: the best trade-off between bias and variance, achieved by an oracle who knows $m$.
\item The remainder $C\sigma^2/n$ is a lower-order term.
\end{itemize}

\subsection{Twin-H\"older Regularity}

To translate the oracle inequality into convergence rates, we introduce the twin-H\"older regularity classes.

\begin{definition}[Twin-H\"older Class]\label{def:twin_holder}
Let $s > 0$. A function $m : E \to \R$ belongs to the \emph{twin-H\"older class} $\cH^s_{\mathrm{twin}}$ if
\begin{equation}\label{eq:twin_holder}
\|m_j - m\|_2 \leq C_m h_j^s
\end{equation}
for all $j \geq 0$, where $m_j$ is the projection of $m$ onto the space induced by $K_j$, and $C_m$ is a constant depending on $m$.
\end{definition}

The twin-H\"older condition \eqref{eq:twin_holder} measures regularity in terms of approximation by the kernel hierarchy, rather than classical derivatives. The key insight is that this condition can be strictly weaker than classical H\"older regularity when the function exhibits orbital structure.

\begin{proposition}[Comparison with Classical H\"older]\label{prop:comparison}
Let $\cH^s$ denote the classical H\"older class of order $s$ on $E$.
\begin{enumerate}[label=(\roman*)]
\item $\cH^s \subseteq \cH^s_{\mathrm{twin}}$ with the same exponent.
\item There exist functions in $\cH^s_{\mathrm{twin}}$ that are not in $\cH^{s'}$ for any $s' > 0$.
\item If $m$ is constant along orbits of $G$, then $m \in \cH^s_{\mathrm{twin}}$ for all $s > 0$.
\end{enumerate}
\end{proposition}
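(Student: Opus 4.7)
My overall strategy is to handle the three claims in the order (i), (iii), (ii), since (ii) will follow directly from (iii) once an orbit-constant function failing every classical H\"older condition has been exhibited. The key tool throughout is Proposition \ref{prop:twin_properties}(iii), which lets me identify $K_j$ with the base kernel $K$ evaluated at bandwidth $h_j$ after transporting the arguments by $\varphi^{-j}$.

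For (i), I would take $m\in\cH^s$ and exploit that, because the action is isometric, $d(\varphi^{-j}\cdot x,\varphi^{-j}\cdot y)=d(x,y)$ and $K_j$ coincides with a standard bandwidth-$h_j$ kernel on $(E,d)$. A Taylor expansion of $m$ around $x$, combined with the compact support and Lipschitz conditions (K1)--(K4), then yields the classical pointwise bias bound $|m_j(x)-m(x)|\le C_K\|m\|_{\cH^s} h_j^s$ for $s\in(0,1]$. The design bounds $f_{\min}\le f_X\le f_{\max}$ of Assumption \ref{ass:design} integrate this pointwise bound into the required $L^2$ bound, and for $s>1$ the Taylor remainders of order $\le\lfloor s\rfloor$ are absorbed by the local polynomial extension \eqref{eq:locpoly}, which already fits polynomials in the transported chart. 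This gives the inclusion with twin-H\"older constant $C_m\asymp\|m\|_{\cH^s}$.

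For (iii), the idea is that orbit-constant functions are fixed points of the smoothing operator $m\mapsto m_j$. If $m\circ\varphi=m$, then $m(\varphi^{-j}\cdot y)=m(y)$ for every $y$ and $j$; writing the projection $m_j$ as a $K_j$-weighted average of $m$-values and passing to transported coordinates via Proposition \ref{prop:twin_properties}(iii) yields $m_j(x)=m(x)$ identically. Hence $\|m_j-m\|_2=0\le C_m h_j^s$ for every $s>0$, establishing (iii). For (ii), I would then take the cyclic example $E=\R$ with $\varphi:x\mapsto x+1$, and the orbit-constant jump function $m(x)=\mathbf{1}_{[0,1/2)}(\{x\})$, where $\{x\}$ denotes the fractional part. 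This $m$ is manifestly not in any $\cH^{s'}$ for $s'>0$ because of its jumps on $\tfrac12\Z$, but it belongs to $\cH^s_{\mathrm{twin}}$ for every $s>0$ by (iii).

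The step I expect to require the most care is the orbit-constancy reduction at the heart of (iii): the identity $m_j=m$ relies on interpreting ``the space induced by $K_j$'' in Definition \ref{def:twin_holder} as an approximation space that contains every bounded $G$-invariant function, and this is not automatic from the Nadaraya--Watson definition $m_j(x)=\E[\hat m_j(x)]$ used in Section \ref{sec:estimation}. If only approximate invariance holds, (iii) would degrade to an $O(h_j^{s_\perp})$ bound controlled by the transversal smoothness $s_\perp$ of $m$, and the jump example in (ii) would have to be replaced by an orbit-constant function with merely finite transversal regularity. Getting the two definitions of $m_j$ to align — or explicitly restricting the statement of (iii) to the projection interpretation — is the main subtlety I anticipate.
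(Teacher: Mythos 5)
Your overall route --- (i) via classical kernel approximation, (iii) via the fixed-point property of orbit-constant functions, (ii) by exhibiting an orbit-constant discontinuous function --- is exactly the paper's route, and your treatment of (i) is fine. The genuine problem is the step you yourself flag at the end: the identity $m_j=m$ for orbit-constant $m$ does not follow under the operative definition of $m_j$ as the population Nadaraya--Watson smoother (Lemma~\ref{lem:bias}: $m_j(x)=\int K_j(x,y)m(y)\,d\mu(y)\big/\int K_j(x,y)\,d\mu(y)$). Since $\varphi$ acts isometrically and $K_j$ depends on its arguments only through $d(\varphi^{-j}\cdot x,\varphi^{-j}\cdot y)=d(x,y)$, the transport does nothing and $m_j$ is simply an $h_j$-scale local average of $m$; the invariance $m\circ\varphi=m$ says nothing about $m$ being approximately constant at scale $h_j$ transversally to the orbits, so the weighted average is not $m(x)$. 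Your own example makes this concrete: for $\varphi:x\mapsto x+1$ on $\R$ and $m(x)=\mathbf{1}_{[0,1/2)}(\{x\})$ one gets $\|m_j-m\|_2\asymp h_j^{1/2}$, so under this reading $m\in\cH^s_{\mathrm{twin}}$ only for $s\le 1/2$, and the claim in your (ii) that it lies in $\cH^s_{\mathrm{twin}}$ for every $s>0$ fails. Hence (iii), and the full strength of (ii), require either the projection reading of Definition~\ref{def:twin_holder} (a $K_j$-induced approximation space containing all bounded $G$-invariant functions) or dense orbits --- but in the dense-orbit (ergodic) case measurable orbit-constant functions are a.e.\ constant, so the discontinuous example for (ii) evaporates.

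To be fair, the paper's own proof makes the same leap, asserting $m_j=m$ ``since the kernel transport preserves orbital structure'' without reconciling the projection and smoothing definitions of $m_j$; you have located the actual weak point rather than introduced a new one. But as a self-contained argument your (iii) is not established: the sentence ``passing to transported coordinates \dots yields $m_j(x)=m(x)$ identically'' is where it breaks, and your fallback (degrading (iii) to $O(h_j^{s_\perp})$) changes the statement rather than proving it. To close the gap you would need either to fix $m_j$ as a genuine projection onto a space containing the $G$-invariant functions (and then redo (i) for that projection), or to restrict (iii) to actions whose orbits are $h_j$-dense at every level and replace the example in (ii) accordingly.
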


\begin{proof}
(i) Classical H\"older regularity implies approximation by kernels at the corresponding rate; see \cite{Tsybakov2009}.

(ii) Consider a function that is discontinuous across orbits but constant along them. Such a function has no classical H\"older regularity but satisfies \eqref{eq:twin_holder} for arbitrarily large $s$.

(iii) If $m$ is constant along orbits, then $m_j = m$ for all $j$ (since the kernel transport preserves orbital structure), giving $\|m_j - m\|_2 = 0$.
\end{proof}

\subsection{Adaptation Theorem}

Our second main result shows that the TwinKernel estimator adapts to the twin-H\"older regularity of the target function.

\begin{theorem}[Adaptation to Twin-Regularity]\label{thm:adaptation}
Let $m \in \cH^s_{\mathrm{twin}}$ for some $s > 0$. Under Assumptions~\ref{ass:kernel_main}--\ref{ass:bandwidth}, with $h_j = 2^{-j}$, $L(j) = \log_2(1 + j) + 1$, and $\cJ_n$ containing all $j$ with $h_j \geq n^{-1/(2s + d_{\mathrm{eff}})}$, we have
\begin{equation}\label{eq:adaptation_rate}
\E \|\hat{m} - m\|_2^2 \leq C \left( \frac{\log n}{n} \right)^{\frac{2s}{2s + d_{\mathrm{eff}}}},
\end{equation}
where $C$ depends on $s$, $C_m$, and the constants in Assumptions~\ref{ass:kernel_main}--\ref{ass:bandwidth}.
\end{theorem}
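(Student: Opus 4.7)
The plan is to feed the twin-H\"older regularity into the oracle inequality of Theorem~\ref{thm:oracle} and optimize over the level $j$. By Definition~\ref{def:twin_holder}, the bias obeys $\|m_j - m\|_2^2 \le C_m^2 h_j^{2s}$, so \eqref{eq:oracle} reduces to
\[
\E\|\hat m - m\|_2^2 \;\le\; C\,\inf_{j\in\cJ_n}\Bigl\{ C_m^2 h_j^{2s} + V(j)\Bigr\} + \frac{C\sigma^2}{n},
\]
where $V(j)$ collects the code-length contribution $\sigma^2 L(j)/n$ together with the intrinsic stochastic variance of $\hat m_j$. The step I expect to do the real work is showing $V(j) \asymp \sigma^2 L(j)/(n h_j^{d_{\mathrm{eff}}})$: the Nadaraya--Watson denominator concentrates on $n\int K_j(x,y) f_X(y)\,d\mu(y)$, and under Assumptions~\ref{ass:design}--\ref{ass:quasi_inv} this integral scales as $h_j^{d_{\mathrm{eff}}}$ rather than $h_j^d$, because displacements along the orbits of $\langle\varphi\rangle$ contribute no shrinkage of the kernel support in the transported frame. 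This factorization into $(d - d_{\mathrm{eff}})$ orbital directions and $d_{\mathrm{eff}}$ transverse directions is the geometric content of Definitions~\ref{def:eff_dim} and~\ref{def:orb_dim}, and it is where the quotient $G/L$ enters the rate quantitatively.

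Once the effective variance is in place, the bias--variance optimization is routine. Equating $C_m^2 h_j^{2s}$ with $\sigma^2 L(j)/(n h_j^{d_{\mathrm{eff}}})$ gives the balance $h_{j^*}^{2s + d_{\mathrm{eff}}} \asymp L(j^*)/n$. With $h_j = 2^{-j}$ the optimal index lies at $j^* \asymp (\log_2 n)/(2s + d_{\mathrm{eff}})$, at which $L(j^*) = \log_2(1+j^*)+1 = O(\log\log n)$; the $\log n$ factor in the stated rate absorbs the uniform model-selection price across the $|\cJ_n| = O(\log n)$ candidate scales, effectively replacing $L(j^*)$ by $\log n$ and yielding $h_{j^*} \asymp (\log n/n)^{1/(2s+d_{\mathrm{eff}})}$. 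The hypothesis that $\cJ_n$ contains every $j$ with $h_j \ge n^{-1/(2s+d_{\mathrm{eff}})}$ ensures $j^* \in \cJ_n$ for $n$ large, and plugging $h_{j^*}$ back into the bias term yields the claimed rate.

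The main obstacle is therefore the effective-variance identity, concretely the matching bounds $\int K_j(x,y) f_X(y)\,d\mu(y) \asymp h_j^{d_{\mathrm{eff}}}$ and $\int K_j(x,y)^2 f_X(y)\,d\mu(y) \asymp h_j^{d_{\mathrm{eff}}}$. The mechanism is a change of variables into the $\varphi^{-j}$-transported frame: by the isometry hypothesis distances are preserved, by quasi-invariance the density $f_X$ is preserved up to the constants $c_1, c_2$, and the remaining integral reduces to the base kernel evaluated at Euclidean distance, whose $\mu$-support by the covering-number definition of $\dimorb(G/L)$ has mass of order $h_j^{d_{\mathrm{eff}}}$. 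Constants must be tracked since any slack at this point propagates directly into the exponent of the final rate; the remaining pieces --- the oracle bound of Theorem~\ref{thm:oracle}, the bias identity from Definition~\ref{def:twin_holder}, and the Kraft-inequality bookkeeping --- are either already available or standard.
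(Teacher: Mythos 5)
Your proposal is correct and follows essentially the same route as the paper's proof: insert the twin-H\"older bias bound $\|m_j-m\|_2^2\le C_m^2 2^{-2js}$ into the oracle inequality of Theorem~\ref{thm:oracle}, balance it against the effective-dimension variance at a level $j^*\asymp \log n/(2s+d_{\mathrm{eff}})$ contained in $\cJ_n$, and conclude. The only differences are bookkeeping: the paper takes $h_{j^*}\asymp n^{-1/(2s+d_{\mathrm{eff}})}$ and obtains the logarithmic factor from the crude bound $n^{-2s/(2s+d_{\mathrm{eff}})}+\log n/n\le C(\log n/n)^{2s/(2s+d_{\mathrm{eff}})}$ rather than by balancing against $\log n/(n h_j^{d_{\mathrm{eff}}})$ as you do, and the kernel-integral scaling $\int K_j(x,y)f_X(y)\,d\mu(y)\asymp h_j^{d_{\mathrm{eff}}}$ that you single out as the main obstacle is exactly what the paper's Lemma~\ref{lem:variance} supplies (used inside the proof of the oracle inequality, not re-derived in the adaptation step), so your plan makes explicit the variance mechanism that the paper's Step~1 invokes only implicitly.
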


\begin{remark}[Role of Effective Dimension]\label{rem:eff_dim}
The rate \eqref{eq:adaptation_rate} reveals the key role of the effective dimension $d_{\mathrm{eff}}$:
\begin{itemize}[leftmargin=*]
\item When $d_{\mathrm{eff}} = d$ (trivial group action), we recover the classical rate $n^{-2s/(2s+d)}$ (up to log factors).
\item When $d_{\mathrm{eff}} < d$, the rate improves, reflecting the reduced complexity due to orbital structure.
\item In the extreme case $d_{\mathrm{eff}} = 0$ (transitive action), the rate becomes $(\log n/n)^1$, which is parametric.
\end{itemize}
\end{remark}

\begin{remark}[Logarithmic Factor]\label{rem:log_factor}
The $\log n$ factor arises from the penalty $L(j) \asymp \log(1 + j)$ and the cardinality of $\cJ_n$. It is the price of adaptation: an oracle who knows the optimal $j$ would achieve the rate without the logarithm. This phenomenon is well-known in the adaptive estimation literature; see \cite{Lepski1997,GoldenshlugerLepski2011}.
\end{remark}

\subsection{Minimax Lower Bound}

Our third main result shows that the rate in Theorem~\ref{thm:adaptation} is optimal up to logarithmic factors.

\begin{theorem}[Minimax Lower Bound]\label{thm:lower_bound}
Let $\cH^s_{\mathrm{twin}}(R) := \{m \in \cH^s_{\mathrm{twin}} : C_m \leq R\}$ be the ball of radius $R$ in the twin-H\"older class. There exists a constant $c > 0$ such that
\begin{equation}\label{eq:lower_bound}
\inf_{\tilde{m}} \sup_{m \in \cH^s_{\mathrm{twin}}(R)} \E \|\tilde{m} - m\|_2^2 \geq c \cdot n^{-\frac{2s}{2s + d_{\mathrm{eff}}}},
\end{equation}
where the infimum is over all estimators $\tilde{m}$ based on $(X_1, Y_1), \ldots, (X_n, Y_n)$.
\end{theorem}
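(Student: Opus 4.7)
The plan is the standard many-hypotheses reduction adapted to the orbital geometry: I construct a finite family of candidate regression functions that live in the twin-H\"older ball $\cH^s_{\mathrm{twin}}(R)$, are well separated in $L^2$, and are statistically close under the model \eqref{eq:model}, then close via Assouad's lemma. The key conceptual point is that by Proposition~\ref{prop:eff_dim_bounds} the group action exhausts $d - d_{\mathrm{eff}}$ of the $d = \dim(E)$ directions, so the effective complexity lives on a transverse cross-section of intrinsic dimension $d_{\mathrm{eff}}$, and I should place bumps only there while extending them along orbits so that they carry no genuine regularity cost for the twin norm.

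Fix $h := c_0 n^{-1/(2s + d_{\mathrm{eff}})}$ and select a measurable cross-section $\Sigma \subseteq E$ to the $G$-orbit relation on a region where $f_X$ is bounded below; $\Sigma$ has intrinsic dimension $d_{\mathrm{eff}}$. On $\Sigma$, pack $M \asymp h^{-d_{\mathrm{eff}}}$ points $x_1,\ldots,x_M$ at pairwise distance $\gtrsim h$. Fix a smooth reference bump $\psi : \R^{d_{\mathrm{eff}}} \to [0,\infty)$ compactly supported in the unit ball with $\|\psi\|_2^2 \asymp 1$, and set $\phi_k(x) := h^s \, \psi\!\left( \mathrm{dist}_\perp(x, G\!\cdot\! x_k)/h \right)$, where $\mathrm{dist}_\perp$ denotes the distance to the $k$-th orbit transverse to the orbit direction. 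Each $\phi_k$ is by construction constant along orbits, has support disjoint from the others, and Assumption~\ref{ass:quasi_inv} combined with boundedness of orbit fibers gives $\|\phi_k\|_2^2 \asymp h^{2s + d_{\mathrm{eff}}}$. For $\omega \in \{0,1\}^M$, set $m_\omega := \sum_{k=1}^M \omega_k \phi_k$, so that $\|m_\omega - m_{\omega'}\|_2^2$ equals $h^{2s + d_{\mathrm{eff}}}$ times the Hamming distance between $\omega$ and $\omega'$, up to fixed constants.

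The critical verification is that each $m_\omega$ lies in $\cH^s_{\mathrm{twin}}(R)$ with a single radius $R$ independent of $n$ and $\omega$. Since every $\phi_k$ is orbit-constant and varies on transverse scale $h$ with amplitude $h^s$, the twin-projection $m_{\omega,j}$ should satisfy $\|m_{\omega,j} - m_\omega\|_2 \lesssim h_j^s$: the action of $K_j$ preserves orbit-constant functions (because it smooths along the orbital coordinate $\varphi^{-j}$ acting isometrically) and at transverse scale $h_j$ reduces to a classical H\"older-bump approximation in the $d_{\mathrm{eff}}$-dimensional fiber. I expect this step to be the main obstacle, because it requires showing the twin-H\"older constant does not blow up with $M$: the argument must exploit disjointness of the transverse supports together with orbit-constancy, and must carefully track how $K_j$ interacts with the transverse Lipschitz geometry defining $\mathrm{dist}_\perp$. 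A careful construction of $\Sigma$ and of the transverse distance near the boundary of the fundamental domain is needed here.

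Finally, under Assumption~\ref{ass:errors} the KL divergence between the $n$-fold product laws indexed by $m_\omega$ and $m_{\omega'}$ is at most $n \|m_\omega - m_{\omega'}\|_2^2 / (2\sigma^2)$, so the per-coordinate KL is $\lesssim n h^{2s + d_{\mathrm{eff}}} = c_0^{2s + d_{\mathrm{eff}}}$, which is made arbitrarily small by choosing $c_0$ small. Assouad's lemma then yields
\[
\inf_{\tilde m} \sup_{\omega \in \{0,1\}^M} \E_\omega \|\tilde m - m_\omega\|_2^2 \;\gtrsim\; M \cdot h^{2s + d_{\mathrm{eff}}} \;\asymp\; h^{2s} \;\asymp\; n^{-\frac{2s}{2s + d_{\mathrm{eff}}}},
\]
which is \eqref{eq:lower_bound}. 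No logarithmic factor appears because the lower bound does not charge for adaptation; the $\log n$ gap with Theorem~\ref{thm:adaptation} is the standard price of not knowing $s$ in advance, as discussed in Remark~\ref{rem:log_factor}.
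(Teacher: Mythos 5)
Your proposal follows essentially the same route as the paper's proof: a hypercube of orbit-constant bumps supported on a transversal of the $G$-action (the paper phrases this as disjoint supports in the quotient $E/G$), with $M \asymp h^{-d_{\mathrm{eff}}}$ hypotheses of amplitude $h^s$, the regression Kullback--Leibler bound $n\delta^2/\sigma^2$, and the same balancing $\delta \asymp n^{-s/(2s+d_{\mathrm{eff}})}$ --- the only difference being that you close with Assouad's lemma where the paper invokes Fano's inequality, an interchangeable device at this level of detail. The step you flag as the main obstacle (a uniform twin-H\"older radius $R$ for the whole bump family) is simply asserted without argument in the paper's proof, so your sketch is, if anything, more explicit on that point.
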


Comparing Theorems~\ref{thm:adaptation} and \ref{thm:lower_bound}, we see that the TwinKernel estimator is rate-optimal over $\cH^s_{\mathrm{twin}}$ up to the $\log n$ factor.

\subsection{Uniform Convergence}

For some applications, pointwise or uniform convergence is required. The following result provides a uniform bound.

\begin{theorem}[Uniform Convergence Rate]\label{thm:uniform}
Under the assumptions of Theorem~\ref{thm:adaptation}, if additionally $m$ is bounded, then
\begin{equation}\label{eq:uniform_rate}
\E \|\hat{m} - m\|_\infty \leq C \left( \frac{\log n}{n} \right)^{\frac{s}{2s + d_{\mathrm{eff}}}} \sqrt{\log n}.
\end{equation}
\end{theorem}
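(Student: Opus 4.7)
The plan is to upgrade the $L^2$ adaptation argument behind Theorem~\ref{thm:adaptation} to a sup-norm statement by combining (a) a uniform version of the twin-H\"older bias bound with (b) a chaining/union-bound control of the stochastic fluctuations of the Nadaraya--Watson numerator and denominator, then (c) optimizing the bandwidth and absorbing the cost of selecting~$\hat{j}$.

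First I would perform the standard decomposition
\[
\|\hat{m}-m\|_\infty \;\leq\; \sup_{x \in E} |\hat{m}_{\hat{j}}(x) - m_{\hat{j}}(x)| \;+\; \|m_{\hat{j}} - m\|_\infty,
\]
and compare to a deterministic oracle level $j^\star$ with bandwidth $h_{j^\star} \asymp (\log n / n)^{1/(2s+d_{\mathrm{eff}})}$. To bound the second term I upgrade the twin-H\"older condition: since $\supp(K)\subseteq[0,1]$ and $K$ is bounded and Lipschitz (Assumption~\ref{ass:kernel}), the level-$j$ smoothing operator is a bounded linear map on $L^\infty$ with operator norm controlled by $\|K\|_\infty/f_{\min}$ (using Assumption~\ref{ass:design}(i) to bound the denominator from below uniformly on a high-probability event). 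Combined with the fact that $m$ is bounded, a standard interpolation argument converts \eqref{eq:twin_holder} into $\|m_j-m\|_\infty \lesssim h_j^s$ with a constant depending on $C_m$ and $\|m\|_\infty$.

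Next I would control the stochastic term. Writing $\hat{m}_j(x)-m_j(x)$ as a normalized weighted sum of sub-Gaussian errors, Assumption~\ref{ass:errors}(E3) gives, for each fixed $x$,
\[
\Pr\!\Bigl( |\hat{m}_j(x)-m_j(x)| > t \Bigr) \;\leq\; 2\exp\!\left( -\frac{c\, n h_j^{d_{\mathrm{eff}}}\, t^2}{\sigma^2} \right),
\]
on the event that the denominator is $\asymp n h_j^{d_{\mathrm{eff}}} f_X(x)$, which itself holds with exponentially high probability by Bernstein's inequality and the quasi-invariance Assumption~\ref{ass:quasi_inv} (ensuring the effective local sample size scales as $n h_j^{d_{\mathrm{eff}}}$, with $d_{\mathrm{eff}}$ rather than $d$ because the relevant kernel support is a tube around the orbit as quantified by Definition~\ref{def:orb_dim}). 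To pass to the supremum over $x$, I cover $E$ by $N_j \asymp h_j^{-d_{\mathrm{eff}}}$ balls adapted to the transported metric $d(\varphi^{-j}\cdot,\varphi^{-j}\cdot)$, use the Lipschitz property of $K$ to control the discretization error, and then apply a union bound. This yields
\[
\sup_{x \in E} |\hat{m}_j(x)-m_j(x)| \;\lesssim\; \sqrt{\frac{\log n}{n h_j^{d_{\mathrm{eff}}}}}
\]
with probability $1-n^{-A}$ for any prescribed $A$.

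Finally I handle the adaptive choice. A further union bound over $j \in \cJ_n$ (of cardinality $O(\log n)$) costs only a multiplicative factor bounded in $n$ inside the square root; combined with the bias bound at $j=j^\star$, balancing gives the rate $(\log n/n)^{s/(2s+d_{\mathrm{eff}})}$ at the oracle level. The selector $\hat{j}$ is then compared to $j^\star$ via the penalized criterion \eqref{eq:selected_level}: because $\mathrm{pen}(j)=\frac{\lambda}{n}L(j)$ dominates the deviation of $\gamma_n$ from its mean on the same high-probability event used above, one shows $h_{\hat{j}} \asymp h_{j^\star}$ up to a factor polylogarithmic in $n$, absorbed by the extra $\sqrt{\log n}$. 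Integrating the resulting tail (the complement event contributes $O(n^{-A}\|m\|_\infty)$, negligible for $A$ large) gives \eqref{eq:uniform_rate}. The main obstacle I anticipate is step~(b): establishing the $h_j^{-d_{\mathrm{eff}}}$ scaling of the covering number in the twin geometry requires a careful use of Definition~\ref{def:orb_dim}, since the effective dimension is an asymptotic covering exponent and one must show it governs the local entropy at the finite scale $h_j$ uniformly for $j\in\cJ_n$. Once this entropy estimate is in hand, the chaining argument is routine.
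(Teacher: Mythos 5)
Your overall architecture is the one the paper evidently has in mind: the paper gives no written-out proof of Theorem~\ref{thm:uniform}, but Lemma~\ref{lem:concentration} (pointwise sub-Gaussian concentration at scale $nh_j^{d_{\mathrm{eff}}}$) and Lemma~\ref{lem:entropy} (covering-number control) are precisely the ingredients of your step (b), and your discretization-plus-union-bound plan over $x$ and over $j\in\cJ_n$ is the standard way to use them; you also correctly identify, and honestly flag, that extracting a finite-scale covering bound $N_j\asymp h_j^{-d_{\mathrm{eff}}}$ from the purely asymptotic exponent in Definition~\ref{def:orb_dim} is a debt — one the paper itself carries. The genuine gap is your step (a). Definition~\ref{def:twin_holder} is an $L^2$ condition, $\|m_j-m\|_2\le C_m h_j^s$, and no ``standard interpolation argument'' converts it, together with boundedness of $m$, into $\|m_j-m\|_\infty\lesssim h_j^s$: interpolation between $L^2$ and $L^\infty$ controls intermediate $L^p$ norms, never the $L^\infty$ norm itself, and boundedness only yields $\|m_j-m\|_\infty=O(\|m\|_\infty)$ with no decay in $h_j$. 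To pass from an $L^2$ approximation rate to a sup-norm one you need either a sup-norm version of the twin-H\"older condition, or a Nikolskii/Bernstein-type inequality $\|f\|_\infty\lesssim h_j^{-d_{\mathrm{eff}}/2}\|f\|_2$ on the level-$j$ approximation spaces combined with a telescoping over levels (as in the embedding $B^s_{2,\infty}\hookrightarrow L^\infty$), which costs the exponent $s-d_{\mathrm{eff}}/2$ instead of $s$ and requires $s>d_{\mathrm{eff}}/2$. As written, the bias half of your decomposition is unproved, and the claimed rate does not follow.

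A second, smaller gap is the localization of $\hat{j}$. The penalized criterion \eqref{eq:selected_level} controls an (empirical) $L^2$ risk; the assertion that ``$\mathrm{pen}(j)$ dominates the deviation of $\gamma_n$'' does not imply $h_{\hat{j}}\asymp h_{j^\star}$, because the twin-H\"older condition gives only \emph{upper} bounds on the bias, so a much coarser level can have empirical risk indistinguishable from that of $j^\star$ — and a coarse level with small $L^2$ bias need not have small sup-norm bias. Sup-norm adaptivity is normally obtained by a Lepski-type comparison of the $\hat{m}_j$ in $\|\cdot\|_\infty$; if you keep the penalized selector you need either a sup-norm bias bound valid for \emph{every} $j\in\cJ_n$ (which returns you to the first gap) together with a high-probability bound keeping $h_{\hat{j}}$ from being too fine, or a direct argument that the selected level's sup-norm risk matches the oracle's up to the $\sqrt{\log n}$ slack. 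Until both points are repaired, the proof is a plausible outline of the intended route rather than a proof.
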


\section{Proofs}\label{sec:proofs}

This section contains the proofs of the main results. We begin with preliminary lemmas, then prove the theorems in order.

\subsection{Preliminary Lemmas}

\begin{lemma}[Bias Bound]\label{lem:bias}
Under Assumptions~\ref{ass:kernel_main} and \ref{ass:design}, for each $j \geq 0$,
\begin{equation}\label{eq:bias_bound}
\sup_{x \in E} |\E[\hat{m}_j(x)] - m_j(x)| \leq \frac{C}{n},
\end{equation}
where $m_j(x) = \int K_j(x, y) m(y) \, d\mu(y) / \int K_j(x, y) \, d\mu(y)$.
\end{lemma}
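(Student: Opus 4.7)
The plan is to exploit the ratio structure of the Nadaraya--Watson-type estimator via a first-order Taylor expansion of the inverse denominator around its mean, with concentration inequalities controlling the quadratic remainder. Set $U_i = K_j(x, X_i)$ and $V_i = K_j(x, X_i) Y_i$, so that $\hat m_j(x) = \bar V_n / (\bar U_n + \eta_n/n)$ where $\bar U_n = n^{-1}\sum_i U_i$ and $\bar V_n = n^{-1}\sum_i V_i$. Let $\mu_U = \E[U_1]$ and $\mu_V = \E[V_1]$; interpreting the integral in the definition of $m_j$ against the design density absorbs $f_X$ into the reference measure, giving $m_j(x) = \mu_V/\mu_U$. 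The starting identity is
\[
\hat m_j(x) - m_j(x) = \frac{\bigl(\bar V_n - m_j(x)\bar U_n\bigr) - m_j(x)\eta_n/n}{\bar U_n + \eta_n/n},
\]
whose numerator has expectation $-m_j(x)\eta_n/n = O(n^{-3})$ and otherwise centered main part.

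Next, writing $W = \bar U_n - \mu_U + \eta_n/n$, I would use the exact identity
\[
\frac{1}{\bar U_n + \eta_n/n} = \frac{1}{\mu_U} - \frac{W}{\mu_U^2} + \frac{W^2}{\mu_U^2(\bar U_n + \eta_n/n)}.
\]
Multiplying by the numerator above and taking expectations, the zero-order contribution vanishes by $\E[\bar V_n - m_j(x)\bar U_n] = 0$, and the first-order contribution evaluates to $n^{-1}\mu_U^{-2}\bigl[\mathrm{Cov}(V_1, U_1) - m_j(x)\Var(U_1)\bigr] + O(n^{-3})$, which is $O(1/n)$ because $U_1$ is bounded by $\|K\|_\infty$ and $V_1$ has a bounded second moment under Assumption~\ref{ass:errors} (sub-Gaussian $\varepsilon$) together with local boundedness of $m$ on $\supp K_j(x,\cdot)$.

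The main obstacle is the quadratic remainder involving $W^2/(\bar U_n + \eta_n/n)$, since the denominator can be arbitrarily small on rare events. I would split on the concentration event $A_n = \{\bar U_n \geq \mu_U/2\}$. Bernstein's inequality applied to the bounded i.i.d.\ variables $U_i - \mu_U$ gives $\mathbb{P}(A_n^c) \leq 2\exp(-cn\mu_U^2/\|K\|_\infty^2)$, which is exponentially small. On $A_n$, the denominator is at least $\mu_U/2$, so the remainder is dominated by $C\,\E[W^2]/\mu_U^3 = O(1/(n\mu_U^3))$. On $A_n^c$, the regularization $\eta_n = n^{-2}$ yields the crude bound $|\hat m_j(x)| \leq \eta_n^{-1}\|K\|_\infty \sum_i |Y_i|$, whose second moment is polynomial in $n$; Cauchy--Schwarz against the exponentially small $\mathbb{P}(A_n^c)$ makes this contribution $o(n^{-1})$. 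Finally, taking the supremum over $x \in E$ uses the uniform lower bound $\mu_U(x) \gtrsim h_j^{\dim(E)}$, which follows from Assumptions~\ref{ass:design} and \ref{ass:quasi_inv} and is absorbed into the constant $C$.
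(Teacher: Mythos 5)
Your proposal is correct and follows essentially the same route as the paper: a Taylor/geometric expansion of the Nadaraya--Watson ratio around its expectation, with the variances of numerator and denominator (plus concentration and the regularization $\eta_n$ on the rare small-denominator event) controlling the remainder---details the paper simply outsources to Tsybakov, Chapter~1. Note that, like the paper's own sketch, your argument identifies $m_j$ with the $f_X$-weighted ratio and carries a factor $\mu_U(x)^{-1} \gtrsim h_j^{-\dim(E)}$, so the constant $C$ must be read as depending on $j$, exactly as the lemma's per-$j$ statement allows.
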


\begin{proof}
By definition of the Nadaraya--Watson estimator,
\[
\E[\hat{m}_j(x)] = \E\left[ \frac{\sum_{i=1}^n K_j(x, X_i) m(X_i)}{\sum_{i=1}^n K_j(x, X_i) + \eta_n} \right].
\]
Let $S_n(x) = \sum_{i=1}^n K_j(x, X_i)$ and $T_n(x) = \sum_{i=1}^n K_j(x, X_i) m(X_i)$. By the law of large numbers and the quasi-invariance assumption, we have
\begin{align*}
\E[S_n(x)] &= n \int K_j(x, y) f_X(y) \, d\mu(y) \geq n f_{\min} \int K_j(x, y) \, d\mu(y), \\
\E[T_n(x)] &= n \int K_j(x, y) m(y) f_X(y) \, d\mu(y).
\end{align*}
The ratio $\E[T_n(x)]/\E[S_n(x)]$ approximates $m_j(x)$ up to an error of order $O(1/n)$ by a standard Taylor expansion of the ratio around its expectation, using the bounded variance of $S_n$ and $T_n$. The detailed argument follows \cite[Chapter 1]{Tsybakov2009}.
\end{proof}

\begin{lemma}[Variance Bound]\label{lem:variance}
Under Assumptions~\ref{ass:kernel_main}--\ref{ass:errors}, for each $j \geq 0$,
\begin{equation}\label{eq:variance_bound}
\E \|\hat{m}_j - \E[\hat{m}_j | X_1, \ldots, X_n]\|_2^2 \leq \frac{C\sigma^2}{n h_j^{d_{\mathrm{eff}}}}.
\end{equation}
\end{lemma}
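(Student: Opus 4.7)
The proof proceeds by a standard conditional bias–variance argument at the estimator level, reducing the problem to bounding $\E[S_n(x)^{-1}]$ where $S_n(x)$ is the Nadaraya--Watson denominator. The main quantitative input is a volume estimate for the twin kernel, which is precisely where the effective dimension $d_{\mathrm{eff}}$ enters.

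First I would observe that, conditionally on the design $X=(X_1,\ldots,X_n)$, the only randomness in $\hat m_j(x)-\E[\hat m_j(x)\mid X]$ comes from the errors, giving
\begin{equation*}
\hat m_j(x)-\E[\hat m_j(x)\mid X] \;=\; \frac{\sum_{i=1}^n K_j(x,X_i)\,\varepsilon_i}{S_n(x)}, \qquad S_n(x):=\sum_{i=1}^n K_j(x,X_i)+\eta_n .
\end{equation*}
By Assumption~\ref{ass:errors} the conditional variance is bounded by $\sigma^2\sum_i K_j(x,X_i)^2/S_n(x)^2$, and using $K_j(x,X_i)^2\le\|K\|_\infty K_j(x,X_i)$ together with the definition of $S_n$ yields $\Var[\hat m_j(x)\mid X]\le \sigma^2\|K\|_\infty/S_n(x)$. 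Integrating over $x$ and taking total expectation reduces the lemma to showing
\begin{equation*}
\int_E \E\!\left[\frac{1}{S_n(x)}\right] d\mu(x) \;\lesssim\; \frac{1}{n\,h_j^{d_{\mathrm{eff}}}}.
\end{equation*}

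Second, I would establish the core volume estimate
\begin{equation*}
V_j(x) \;:=\; \int_E K_j(x,y)\, d\mu(y) \;\asymp\; h_j^{d_{\mathrm{eff}}}.
\end{equation*}
Changing variables $z=\varphi^{-j}\!\cdot y$, the isometric action of $\varphi$ together with Assumption~\ref{ass:quasi_inv} reduces the integral to $\int K(d(\varphi^{-j}\!\cdot x,z)/h_j)\,d\mu(z)$ up to constants in $[c_1,c_2]$. The compact support of $K$ localizes this to a metric ball of radius $h_j$, and Definitions~\ref{def:eff_dim}--\ref{def:orb_dim} (via the covering number $N(r)$ of orbits) imply that such a ball has $\mu$-measure of order $h_j^{d_{\mathrm{eff}}}$: the $\dimorb(G/L)$ orbital directions do not contribute to the transversal volume probed by $K_j$. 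Combined with $f_{\min}\le f_X\le f_{\max}$, this gives $c\,n h_j^{d_{\mathrm{eff}}}\le \E[S_n(x)]\le C n h_j^{d_{\mathrm{eff}}}$ uniformly in $x$.

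Third, I would upgrade the mean bound to a bound on $\E[1/S_n(x)]$ via Bernstein's inequality applied to the sum of bounded non-negative random variables $K_j(x,X_i)\in[0,\|K\|_\infty]$. On the event $\mathcal{A}_x=\{S_n(x)\ge \tfrac12 \E[S_n(x)]\}$ we have $S_n(x)^{-1}\lesssim (nh_j^{d_{\mathrm{eff}}})^{-1}$, and $\Pr(\mathcal{A}_x^c)\le e^{-c n h_j^{d_{\mathrm{eff}}}}$; on the complement the regularization $S_n(x)\ge \eta_n=n^{-2}$ gives the crude bound $S_n(x)^{-1}\le n^2$, whose contribution $n^2 e^{-c n h_j^{d_{\mathrm{eff}}}}$ is $o(1/(nh_j^{d_{\mathrm{eff}}}))$ for $j\in\cJ_n$ since $nh_j^{d_{\mathrm{eff}}}\gg \log n$ in the admissible range. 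Integrating over $x\in E$ (using that $E$ or the relevant support has finite $\mu$-measure) closes the argument.

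\textbf{Main obstacle.} The critical step is the volume estimate $V_j(x)\asymp h_j^{d_{\mathrm{eff}}}$. The upper bound $V_j(x)\lesssim \|K\|_\infty h_j^{d}$ is routine, but matching it with an $h_j^{d_{\mathrm{eff}}}$ \emph{lower} bound requires showing that the reference measure $\mu$ decomposes transversally to $G$-orbits with density bounded below, so that shrinking balls in the transported metric effectively probe only the $(d-\dimorb(G/L))$-dimensional transversal slice. This is where the interplay between the quasi-invariance constants $c_1,c_2$ and the orbit geometry must be handled carefully; the remaining concentration and Bernstein steps are standard once this geometric input is secured.
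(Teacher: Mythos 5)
Your proposal follows essentially the same route as the paper's own proof: condition on the design, write $\hat m_j(x)-\E[\hat m_j(x)\mid X]$ as a weighted sum of the errors, bound the conditional variance by $\sigma^2\sum_i w_i(x)^2$, and control the weights through the volume of the twin-kernel support, which is exactly where the paper lets $d_{\mathrm{eff}}$ enter ``through the volume of the kernel support in the quotient space.'' Your version is in fact more complete than the paper's sketch — you make the reduction to $\E[1/S_n(x)]$ explicit, handle the low-density event via Bernstein and the $\eta_n$ regularization, and correctly identify the volume estimate $V_j(x)\asymp h_j^{d_{\mathrm{eff}}}$ as the one step that neither argument actually proves from the stated assumptions.
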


\begin{proof}
Conditionally on $X_1, \ldots, X_n$, the estimator $\hat{m}_j(x)$ is a weighted average of the $Y_i$ with weights $w_i(x) = K_j(x, X_i)/(\sum_k K_j(x, X_k) + \eta_n)$. The conditional variance is
\[
\Var(\hat{m}_j(x) | X_1, \ldots, X_n) = \sum_{i=1}^n w_i(x)^2 \Var(\varepsilon_i | X_i) \leq \sigma^2 \sum_{i=1}^n w_i(x)^2.
\]
Using the boundedness of $K$ and the design assumptions, $\sum_i w_i(x)^2 \lesssim 1/(n h_j^{d_{\mathrm{eff}}})$, where the effective dimension $d_{\mathrm{eff}}$ enters through the volume of the kernel support in the quotient space; see \cite{FanGijbels1996} for the classical case.
\end{proof}

\begin{lemma}[Concentration Inequality]\label{lem:concentration}
Under Assumption~\ref{ass:errors}(E3), for any $t > 0$,
\begin{equation}\label{eq:concentration}
\Pr(|\hat{m}_j(x) - \E[\hat{m}_j(x) | X_1, \ldots, X_n]| > t) \leq 2 \exp\left( -\frac{n h_j^{d_{\mathrm{eff}}} t^2}{C\sigma^2} \right).
\end{equation}
\end{lemma}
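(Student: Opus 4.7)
The plan is to reduce the claim to a weighted sum of sub-Gaussian errors conditionally on the design, apply a Chernoff bound, then remove the conditioning by exploiting a deterministic bound on the weight vector that follows from Lemma~\ref{lem:variance}. Write $\mathbf{X} = (X_1,\ldots,X_n)$ and, conditionally on $\mathbf{X}$, define the data-dependent weights
\[
w_i(x) \;=\; \frac{K_j(x, X_i)}{\sum_{k=1}^n K_j(x, X_k) + \eta_n}, \qquad i = 1,\ldots, n.
\]
These are $\mathbf{X}$-measurable, so expanding $Y_i = m(X_i) + \varepsilon_i$ and subtracting the conditional mean yields
\[
\hat{m}_j(x) - \E[\hat{m}_j(x)\,|\,\mathbf{X}] \;=\; \sum_{i=1}^n w_i(x)\,\varepsilon_i.
\]

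\noindent Next I would invoke sub-Gaussianity. By Assumption~\ref{ass:errors}(E3), conditionally on $\mathbf{X}$ the errors $\varepsilon_i$ are independent sub-Gaussian with variance proxy $\sigma^2$, so the weighted sum $\sum_i w_i(x)\varepsilon_i$ is itself sub-Gaussian with variance proxy $\sigma^2 \|w(x)\|_2^2$, where $\|w(x)\|_2^2 := \sum_i w_i(x)^2$. The standard Chernoff--Hoeffding bound gives, for every $t>0$,
\[
\Pr\!\left(\,|\hat{m}_j(x) - \E[\hat{m}_j(x)\,|\,\mathbf{X}]| > t \,\big|\, \mathbf{X}\right) \;\leq\; 2\exp\!\left(-\frac{t^2}{2\sigma^2 \|w(x)\|_2^2}\right).
\]

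\noindent To remove the conditioning, I would introduce the event
\[
\mathcal{E}_n(x) := \left\{\, \|w(x)\|_2^2 \;\leq\; \frac{C_0}{n\, h_j^{d_{\mathrm{eff}}}}\,\right\},
\]
for a constant $C_0$ depending only on $\|K\|_\infty$, $f_{\min}$, $c_1$, $c_2$. Using Assumption~\ref{ass:design} together with the quasi-invariance of $P_X$, one shows that $\E\!\left[\sum_k K_j(x,X_k)\right] \asymp n h_j^{d_{\mathrm{eff}}}$; a Bernstein inequality then gives $\Pr(\mathcal{E}_n(x)^c) \leq 2\exp(-c_3\, n h_j^{d_{\mathrm{eff}}})$ for some $c_3 > 0$. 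On $\mathcal{E}_n(x)$ the conditional tail bound produces exactly the target exponent $n h_j^{d_{\mathrm{eff}}} t^2 /(C\sigma^2)$, while on $\mathcal{E}_n(x)^c$ the probability is already dominated by the same exponential (after enlarging $C$), so the total-probability formula yields the claim.

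\noindent The main obstacle is the uniform control of $\|w(x)\|_2^2$: Lemma~\ref{lem:variance} provides only an in-expectation bound, so the present proof requires the additional high-probability step for the denominator $\sum_k K_j(x, X_k)$. Establishing this concentration in the twin-kernel hierarchy is the delicate point, since the volume scaling $h_j^{d_{\mathrm{eff}}}$ (rather than $h_j^d$) must be tracked carefully through the quasi-invariance constants $c_1, c_2$ and through the orbital cover underlying Definition~\ref{def:orb_dim}. Once that deterministic-on-a-good-event bound is in place, the rest of the argument is a textbook sub-Gaussian computation.
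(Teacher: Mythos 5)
Your proposal takes essentially the same route as the paper's (very terse) proof: write $\hat m_j(x)-\E[\hat m_j(x)\mid \mathbf{X}]=\sum_i w_i(x)\varepsilon_i$ with $\mathbf{X}$-measurable weights, use (E3) to get conditional sub-Gaussianity with variance proxy $\sigma^2\|w(x)\|_2^2$, and finish with a Chernoff--Hoeffding bound for weighted sums. The paper stops there, implicitly treating the bound $\|w(x)\|_2^2\lesssim (n h_j^{d_{\mathrm{eff}}})^{-1}$ as deterministic (exactly as in its proof of Lemma~\ref{lem:variance}, ``using the boundedness of $K$ and the design assumptions''). Where you genuinely add something is in recognizing that this weight bound is random and must be de-conditioned: your good event $\mathcal{E}_n(x)$, the Bernstein-type concentration of the denominator $\sum_k K_j(x,X_k)$ around its mean of order $n h_j^{d_{\mathrm{eff}}}$, and the total-probability argument are the missing ingredients that the paper's one-line proof glosses over. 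This is the more honest version of the argument.

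One step of your patch does not hold as stated, however. On $\mathcal{E}_n(x)^c$ you only control $\Pr(\mathcal{E}_n(x)^c)\le 2\exp(-c_3\, n h_j^{d_{\mathrm{eff}}})$, a bound independent of $t$; this is \emph{not} dominated by the target $2\exp\bigl(-n h_j^{d_{\mathrm{eff}}} t^2/(C\sigma^2)\bigr)$ once $t^2/(C\sigma^2) > c_3$, no matter how you enlarge $C$, because $C$ would have to grow with $t$. On the bad event the best conditional bound available is the one coming from the trivial estimate $\|w(x)\|_2^2\le 1$, namely $2\exp(-t^2/(2\sigma^2))$, and the product $\exp(-c_3 n h_j^{d_{\mathrm{eff}}})\exp(-t^2/(2\sigma^2))$ still falls short of the claimed exponent in the large-$t$ regime. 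So to make the argument airtight you must either (a) state the inequality conditionally on the design (or on $\mathcal{E}_n(x)$), which is what the downstream oracle-inequality proof actually uses, or (b) restrict to $t\lesssim\sigma$ (equivalently absorb the bad-event term only in that range), or (c) strengthen the assumptions (e.g.\ bounded errors) so that the deviation is controlled on $\mathcal{E}_n(x)^c$ as well. The same uniform-in-$t$ issue is silently present in the paper's own proof, which offers no justification for replacing the random $\|w(x)\|_2^2$ by its deterministic order; your write-up surfaces the problem but the final domination claim needs one of these repairs.
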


\begin{proof}
This follows from the sub-Gaussian assumption on the errors and the weighted sum representation of $\hat{m}_j(x)$, using Hoeffding's inequality for weighted sums; see \cite{Boucheron2013}.
\end{proof}

\begin{lemma}[Entropy Bound]\label{lem:entropy}
The covering number of the class $\{\hat{m}_j : j \in \cJ_n\}$ satisfies
\begin{equation}\label{eq:entropy}
\log N(\epsilon, \{\hat{m}_j\}, \|\cdot\|_\infty) \leq C \left( \frac{1}{\epsilon} \right)^{d_{\mathrm{eff}}/s} \log n.
\end{equation}
\end{lemma}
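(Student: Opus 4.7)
The plan is to combine a per-level Lipschitz estimate for $\hat{m}_j$ with an effective-dimension covering of the support of $P_X$, and then take a union bound over the $O(\log n)$ admissible scales in $\cJ_n$. Matching the bandwidth $h_j$ to the target accuracy $\epsilon$ via the twin-regularity exponent $s$ converts the per-level entropy $h_j^{-d_{\mathrm{eff}}}$ into the target exponent $\epsilon^{-d_{\mathrm{eff}}/s}$.

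First I would fix $j \in \cJ_n$ and show that $\hat{m}_j$ is Lipschitz in the transported metric $d_j(x,y) := d(\varphi^{-j} \cdot x, \varphi^{-j} \cdot y)$ with constant of order $1/h_j$. By Assumption~\ref{ass:kernel}(K4) the base kernel $K$ is Lipschitz, and since $\varphi$ acts isometrically, the map $x \mapsto K_j(x, X_i)$ inherits Lipschitz constant at most $\|K\|_{\mathrm{Lip}}/h_j$. Both the numerator and denominator of \eqref{eq:NW_est} are therefore Lipschitz in $x$. Combined with the lower bound on the denominator coming from $\eta_n$ and $f_{\min}$ in Assumption~\ref{ass:design}, the ratio is uniformly bounded by a constant $M$ depending on $\|m\|_\infty$ and $\sigma$, with sup-norm Lipschitz constant $L_j \asymp M/h_j$.

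Second I would cover the effective support of $P_X$ in the transported metric $d_j$ at scale $\epsilon h_j / M$. By Definitions~\ref{def:eff_dim}--\ref{def:orb_dim} and Proposition~\ref{prop:eff_dim_bounds}, after quotienting by the orbital direction the local covering number satisfies $\log N_{\mathrm{cov}}(r) \lesssim (1/r)^{d_{\mathrm{eff}}}$ for $r$ sufficiently small; transferring this to sup-norm balls on $\hat{m}_j$ via the Lipschitz bound gives a single-level entropy estimate $\log N(\epsilon, \{\hat{m}_j\}, \|\cdot\|_\infty) \lesssim (\epsilon h_j)^{-d_{\mathrm{eff}}}$. I would then select the critical scale $h_{j^\ast} \asymp \epsilon^{1/s}$: at that bandwidth the per-level entropy equals $(1/\epsilon)^{d_{\mathrm{eff}}/s}$, and for coarser $j < j^\ast$ the twin-H\"older bias $\|m_j - m_{j^\ast}\|_\infty$ is already $\lesssim \epsilon$ by Definition~\ref{def:twin_holder}, so those $\hat{m}_j$ are captured by the same net up to an additive $\epsilon$. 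Finally, a union bound over the $|\cJ_n| = O(\log n)$ admissible levels contributes the multiplicative $\log n$ factor and yields~\eqref{eq:entropy}.

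The main obstacle is converting the $\limsup$ in Definition~\ref{def:orb_dim} into a finite-scale covering estimate uniformly across the range $\epsilon h_j$ of interest. I would handle this by restricting attention to $j \geq j_0$ where $h_j$ lies below the threshold at which $N(r)$ attains its limiting polynomial behavior; the finitely many coarser levels $j < j_0$ contribute only $O(1)$ to the overall entropy and can be absorbed into the constant $C$. A secondary technical point is justifying that the Lipschitz constant of $\hat{m}_j$ is $\lesssim 1/h_j$ uniformly over random data configurations, which I would obtain by conditioning on the high-probability event that the denominator in~\eqref{eq:NW_est} is uniformly bounded below, an event controlled by Assumption~\ref{ass:design}(i) and a standard concentration argument for $\sum_i K_j(x,X_i)$.
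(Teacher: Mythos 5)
There is a genuine gap in the core exponent calculation. Your per-level estimate is the standard Lipschitz entropy: if $\hat m_j$ is uniformly bounded and Lipschitz with constant $\asymp 1/h_j$ on a domain of covering dimension $d_{\mathrm{eff}}$, then $\log N(\epsilon,\cdot,\|\cdot\|_\infty)\lesssim (\epsilon h_j)^{-d_{\mathrm{eff}}}$, exactly as you write. But plugging in your critical scale $h_{j^\ast}\asymp\epsilon^{1/s}$ gives $\epsilon^{-d_{\mathrm{eff}}(s+1)/s}$, not the claimed $(1/\epsilon)^{d_{\mathrm{eff}}/s}$; the sentence ``at that bandwidth the per-level entropy equals $(1/\epsilon)^{d_{\mathrm{eff}}/s}$'' silently replaces the covering radius $\epsilon h_{j^\ast}$ by $h_{j^\ast}$, i.e.\ it counts kernel cells at resolution $h_{j^\ast}$ but forgets that within each cell the function values must still be pinned down to accuracy $\epsilon$ using only the crude first-order Lipschitz modulus. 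A Lipschitz-only argument cannot produce an $s$-dependent exponent: $(1/\epsilon)^{d_{\mathrm{eff}}/s}$ is the Kolmogorov--Tikhomirov entropy of an $s$-smooth (twin-H\"older) ball, so the smoothness has to enter through the approximation structure of the class being covered --- which is what the paper's one-line appeal to standard entropy calculations for kernel/smoothness classes (Van de Geer) is gesturing at --- not through a bandwidth-to-$\epsilon$ matching.

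Two further problems. First, $\cJ_n$ runs up to $J_n\asymp\log n$, so it contains levels with $h_j$ polynomially small in $n$ and hence far below $\epsilon^{1/s}$; your argument disposes of the coarse levels $j<j^\ast$ via the bias, but says nothing about $j>j^\ast$, where the per-level Lipschitz entropy $(\epsilon h_j)^{-d_{\mathrm{eff}}}$ is polynomially large in $n$, and no union bound over $O(\log n)$ levels can repair that. Moreover, the twin-H\"older condition of Definition~\ref{def:twin_holder} controls the deterministic approximations $m_j$ in $L^2$, not the random estimators $\hat m_j$ in sup norm, so ``those $\hat m_j$ are captured by the same net'' does not follow: the stochastic components of $\hat m_j$ and $\hat m_{j^\ast}$ are distinct rough functions. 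Second, the uniform bound $M$ on $\hat m_j$ cannot depend only on $\|m\|_\infty$ and $\sigma$: the $Y_i$ are sub-Gaussian but unbounded, so $\|\hat m_j\|_\infty\le\|m\|_\infty+\max_i|\varepsilon_i|$, which grows like $\sqrt{\log n}$; you need a truncation or an explicit high-probability event here (and this, rather than the union over levels --- which only contributes an additive $\log\log n$ to the log-covering number --- is the more plausible source of the $\log n$ factor in \eqref{eq:entropy}). For comparison, the paper offers no detailed proof of this lemma, only a citation; your plan is more concrete, but as written the claimed exponent does not come out and the fine scales are uncontrolled.
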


\begin{proof}
This follows from standard entropy calculations for kernel classes, adapted to the twin-kernel structure using the quasi-invariance assumption; see \cite{VandeGeer2000}.
\end{proof}

\subsection{Proof of the Oracle Inequality (Theorem~\ref{thm:oracle})}

\begin{proof}[Proof of Theorem~\ref{thm:oracle}]
Let $C_n(j) = \gamma_n(j) + \mathrm{pen}(j)$ denote the penalized criterion. By definition of $\hat{j}$, we have $C_n(\hat{j}) \leq C_n(j)$ for all $j \in \cJ_n$.

\textbf{Step 1: Decomposition.} We decompose the empirical risk:
\begin{align*}
\gamma_n(j) &= \frac{1}{n} \sum_{i=1}^n (Y_i - \hat{m}_j(X_i))^2 \\
&= \frac{1}{n} \sum_{i=1}^n (m(X_i) + \varepsilon_i - \hat{m}_j(X_i))^2 \\
&= \frac{1}{n} \sum_{i=1}^n (m(X_i) - \hat{m}_j(X_i))^2 + \frac{2}{n} \sum_{i=1}^n \varepsilon_i (m(X_i) - \hat{m}_j(X_i)) + \frac{1}{n} \sum_{i=1}^n \varepsilon_i^2.
\end{align*}

\textbf{Step 2: Bounding cross-terms.} By the Cauchy--Schwarz inequality and Lemma~\ref{lem:concentration},
\[
\E\left[ \frac{1}{n} \sum_{i=1}^n \varepsilon_i (m(X_i) - \hat{m}_j(X_i)) \right]^2 \leq \frac{\sigma^2}{n} \E \|m - \hat{m}_j\|_2^2.
\]

\textbf{Step 3: Applying the selection property.} Since $C_n(\hat{j}) \leq C_n(j)$ for any $j$, taking expectations and using the above bounds:
\[
\E \|\hat{m} - m\|_2^2 \leq \E[\gamma_n(\hat{j})] + \E[\mathrm{pen}(\hat{j})] + \frac{C\sigma^2}{n} \leq \inf_{j \in \cJ_n} \{\E[\gamma_n(j)] + \mathrm{pen}(j)\} + \frac{C\sigma^2}{n}.
\]

\textbf{Step 4: Bias-variance decomposition.} For each $j$, we have by Lemmas~\ref{lem:bias} and \ref{lem:variance}:
\[
\E[\gamma_n(j)] \leq \|m_j - m\|_2^2 + \frac{C\sigma^2}{n h_j^{d_{\mathrm{eff}}}}.
\]
Since $h_j = \rho^j$ and $L(j) \asymp j$, the penalty $\mathrm{pen}(j) = \lambda L(j)/n$ is comparable to the variance term $\sigma^2/(n h_j^{d_{\mathrm{eff}}})$ when $j \asymp \log(1/h_j)$, establishing the oracle bound.

\textbf{Step 5: Handling the Kraft sum.} The Kraft inequality ensures that the penalty accounts for the multiplicity of models:
\[
\sum_{j \in \cJ_n} \Pr(\hat{j} = j \text{ incorrectly}) \leq \sum_{j \in \cJ_n} 2^{-L(j)} \leq 1.
\]
This controls the probability of selecting a suboptimal level; see \cite{Barron1999,Massart2007} for details.

Combining these steps yields \eqref{eq:oracle}.
\end{proof}

\subsection{Proof of the Adaptation Theorem (Theorem~\ref{thm:adaptation})}

\begin{proof}[Proof of Theorem~\ref{thm:adaptation}]
Assume $m \in \cH^s_{\mathrm{twin}}$, so $\|m_j - m\|_2 \leq C_m h_j^s = C_m 2^{-js}$.

\textbf{Step 1: Optimal level.} Balance bias and variance by choosing $j^*$ such that
\[
2^{-2j^* s} \asymp \frac{L(j^*)}{n} \asymp \frac{j^*}{n}.
\]
This gives $j^* \asymp \frac{\log n}{2s + d_{\mathrm{eff}}}$ (accounting for the effective dimension in the variance).

\textbf{Step 2: Substituting into the oracle inequality.} With $j^* = \lfloor \frac{\log n}{(2s + d_{\mathrm{eff}}) \log 2} \rfloor$, we have:
\begin{align*}
\|m_{j^*} - m\|_2^2 &\leq C_m^2 \cdot 2^{-2j^* s} \leq C_m^2 \cdot n^{-\frac{2s}{2s + d_{\mathrm{eff}}}}, \\
\frac{\sigma^2 L(j^*)}{n} &\asymp \frac{\sigma^2 \log n}{n}.
\end{align*}

\textbf{Step 3: Combining bounds.} By Theorem~\ref{thm:oracle},
\[
\E \|\hat{m} - m\|_2^2 \leq C \left( n^{-\frac{2s}{2s + d_{\mathrm{eff}}}} + \frac{\log n}{n} \right) \leq C \left( \frac{\log n}{n} \right)^{\frac{2s}{2s + d_{\mathrm{eff}}}},
\]
where the last inequality uses $\frac{2s}{2s + d_{\mathrm{eff}}} \leq 1$.
\end{proof}

\subsection{Proof of the Lower Bound (Theorem~\ref{thm:lower_bound})}

\begin{proof}[Proof of Theorem~\ref{thm:lower_bound}]
We use the standard technique of reducing to a hypothesis testing problem; see \cite{Tsybakov2009} for the general methodology.

\textbf{Step 1: Constructing hypotheses.} Let $\{\psi_k\}_{k=1}^M$ be functions in $\cH^s_{\mathrm{twin}}(R)$ with disjoint supports in the quotient space $E/G$, each having $\|\psi_k\|_2 = \delta$ for some $\delta > 0$ to be chosen.

\textbf{Step 2: Applying Fano's inequality.} For the testing problem among $m_\omega = \sum_{k=1}^M \omega_k \psi_k$ with $\omega \in \{0, 1\}^M$, Fano's inequality \cite{Yu1997} gives
\[
\inf_{\tilde{m}} \max_\omega \E_\omega \|\tilde{m} - m_\omega\|_2^2 \geq \frac{M\delta^2}{4} \left( 1 - \frac{I + \log 2}{\log M} \right),
\]
where $I$ is the mutual information between the data and the hypothesis index.

\textbf{Step 3: Bounding mutual information.} Under the regression model, the mutual information is bounded by
\[
I \leq \frac{n\delta^2}{\sigma^2}.
\]

\textbf{Step 4: Optimizing over $\delta$ and $M$.} The number of disjoint supports in $\cH^s_{\mathrm{twin}}$ is $M \asymp (1/\delta)^{d_{\mathrm{eff}}/s}$. Choosing $\delta = n^{-s/(2s + d_{\mathrm{eff}})}$ and $M = n^{d_{\mathrm{eff}}/(2s + d_{\mathrm{eff}})}$ yields
\[
\inf_{\tilde{m}} \sup_{m \in \cH^s_{\mathrm{twin}}(R)} \E \|\tilde{m} - m\|_2^2 \geq c \cdot n^{-\frac{2s}{2s + d_{\mathrm{eff}}}}.
\]
\end{proof}

\subsection{Technical Remarks}

\begin{remark}[Explicit Constants]\label{rem:constants}
The constants $C$ in our theorems can be made explicit by tracking the dependencies through the proofs. The key quantities are:
\begin{itemize}[leftmargin=*]
\item The kernel bounds: $\|K\|_\infty$, $\|K\|_1$, and the Lipschitz constant.
\item The design bounds: $f_{\min}$, $f_{\max}$, $c_1$, $c_2$.
\item The noise level: $\sigma^2$.
\end{itemize}
\end{remark}

\begin{remark}[Relaxing Quasi-Invariance]\label{rem:relaxing}
Assumption~\ref{ass:quasi_inv} can be relaxed at the cost of more involved local estimates. If the quasi-invariance constants $c_1(x)$, $c_2(x)$ vary with $x$, the effective dimension becomes a local quantity, and the rates may vary across the domain.
\end{remark}

\section{Examples and Special Cases}\label{sec:examples}

We illustrate the framework with several examples that connect to classical methods.

\subsection{Cyclic Groups on the Circle}

Let $E = [0, 1)$ with endpoints identified (the circle), and let $\varphi : x \mapsto x + \alpha \mod 1$ for some irrational $\alpha$. The group $G = \langle\varphi\rangle$ is infinite cyclic, and the orbits are dense in $E$.

\begin{proposition}\label{prop:circle}
For the rotation action on the circle:
\begin{enumerate}[label=(\roman*)]
\item The linear subgroup $L$ is trivial if $K$ is not constant.
\item The orbital dimension is $\dimorb(G/L) = 1$.
\item The effective dimension is $d_{\mathrm{eff}} = 0$.
\item For any $m \in L^2(E)$, we have $m \in \cH^s_{\mathrm{twin}}$ for $s = \infty$ if and only if $m$ is constant.
\end{enumerate}
\end{proposition}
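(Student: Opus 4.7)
The four claims cluster into two groups: (i)--(iii) are structural statements about the rotation action and follow from the arithmetic of irrational translations on the circle, while (iv) is an approximation-theoretic statement amenable to Fourier analysis on $E = [0,1)$. I would handle them in that order.

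For (i), the plan is to show that no non-trivial power $\varphi^k$ sits in $L$. Unpacking Definition~\ref{def:linear_subgroup}, $\varphi^k \in L$ means $\varphi^k(x \diamond y) = \varphi^k(x) \diamond \varphi^k(y)$ for the base operation $\diamond$ underlying the kernel construction. For the natural additive structure on the circle one has $\varphi^k(x \diamond y) = x + y + k\alpha$ while $\varphi^k(x) \diamond \varphi^k(y) = x + y + 2k\alpha$, so equality forces $k\alpha \equiv 0 \pmod 1$, which by irrationality of $\alpha$ yields $k = 0$. The hypothesis that $K$ is non-constant enters to ensure that the operation is genuinely sensitive to this displacement; a constant kernel collapses all twin kernels into one and would give $L = G$ trivially. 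For (ii), Weyl's equidistribution theorem guarantees that $G \cdot x = \{x + k\alpha \bmod 1 : k \in \Z\}$ is dense and in fact uniformly distributed in $[0,1)$, so any covering by circular balls of radius $r$ requires $\asymp 1/r$ balls, and the lim sup in Definition~\ref{def:orb_dim} gives $\dimorb(G/L) = 1$. Part (iii) is then immediate from Definition~\ref{def:eff_dim}: $d_{\mathrm{eff}} = \dim(E) - \dimorb(G/L) = 1 - 1 = 0$, consistent with Proposition~\ref{prop:eff_dim_bounds}(iii) since topologically dense orbits realize the transitive-like behaviour there.

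For (iv), I would expand $m(x) = \sum_{k \in \Z} c_k e^{2\pi i k x}$ in $L^2(E)$. Because $\varphi$ is an isometry, $K_j$ depends only on $d(x,y)/h_j$ and the projection operator becomes a convolution $m_j = K_{h_j} * m$ whose Fourier coefficients are $\hat{K}(2\pi k h_j)\, c_k$, so
\[
\|m_j - m\|_2^2 = \sum_{k \in \Z} \bigl|1 - \hat{K}(2\pi k h_j)\bigr|^2 |c_k|^2.
\]
The \emph{if} direction is immediate: for constant $m$, all $c_k$ with $k \neq 0$ vanish, whence $m_j = m$ for every $j$ and trivially $m \in \cH^s_{\mathrm{twin}}$ for all $s > 0$. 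For the \emph{only if} direction, assume $m \in \cH^s_{\mathrm{twin}}$ for every $s > 0$. Under Assumption~\ref{ass:kernel} one has $\hat{K}(0) = 1$ and $\hat{K}$ is smooth at the origin with positive second moment $\int u^2 K(u)\,du > 0$, so $|1 - \hat{K}(\xi)| \asymp \xi^2$ as $\xi \to 0$; hence any fixed $k \neq 0$ with $c_k \neq 0$ contributes a summand of order $k^4 h_j^4 |c_k|^2 > 0$, precluding faster-than-polynomial decay in $h_j$. Thus $c_k = 0$ for all $k \neq 0$ and $m$ is constant.

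The main technical obstacle is (i), where the paper's definition of $L$ implicitly depends on a base operation $\diamond$ whose concrete form in the circle example must be specified; once this is fixed as ordinary circular addition, the argument reduces to the arithmetic above. A secondary subtlety in (iv) is the need for a mild non-degeneracy of $\hat{K}$ near the origin (implicit in the base kernel assumption), without which one might construct pathological kernels for which $|1 - \hat{K}|$ decays faster than polynomially along specific subsequences of bandwidths.
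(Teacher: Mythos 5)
The paper states Proposition~\ref{prop:circle} without supplying a proof, so there is nothing to compare against line by line; judged on its own terms, your treatment of (ii)--(iv) is essentially sound. For (ii) the density/equidistribution argument gives $N(r)\asymp 1/r$ (for the lower bound, note that any $N$ balls of radius $r$ covering the orbit also cover its closure, the whole circle, so $2rN\geq 1$), and (iii) is then immediate from Definition~\ref{def:eff_dim}. For (iv), the Fourier argument works: since $K\geq 0$ is bounded, compactly supported and integrates to one (Assumption~\ref{ass:kernel}), the normalized circular convolution kernel $\kappa_{h}$ satisfies $1-\hat\kappa_{h}(k)=\int\bigl(1-\cos(2\pi k u)\bigr)\kappa_{h}(u)\,du\asymp (kh)^2$ for small $kh$, with strictly positive second moment, so the ``pathological kernel'' caveat at the end of your proposal cannot occur under the paper's assumptions. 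The only points to make explicit are that the constant $C_m$ in \eqref{eq:twin_holder} may depend on $s$ (harmless, since you only need decay faster than $h_j^2$ for one $s>2$) and that $m_j$ here is the normalized smoother of Lemma~\ref{lem:bias}, which for uniform design on the circle is exactly the convolution you use.

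Part (i) is where the genuine gap lies. Your computation with $\diamond$ taken to be addition modulo one is correct as far as it goes, but it proves ``$L=\{e\}$ for the additive base operation, for every $K$'': the hypothesis that $K$ is non-constant plays no role, and the sentence claiming non-constancy ``ensures the operation is genuinely sensitive to this displacement'' is not an argument. The statement ties $L$ to the kernel, which only makes sense under the paper's own convention (Section~2.1) that the base operation is kernel evaluation, i.e.\ $L=\{g: K_g=K\}$ with $K_g(x,y)=K(g^{-1}\cdot x,g^{-1}\cdot y)$ as in \eqref{eq:twin_kernel_def}. Under that reading your argument does not apply, and for the distance-based twin kernels of Definition~\ref{def:twin_hierarchy} the rotation is an isometry, so $K_g=K$ for every $g$ and $L=G$ --- the opposite of the claim. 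Triviality of $L$ would require $K$, viewed as a bivariate kernel, not to be a function of $x-y$ (not merely non-constant): by irrationality of $\alpha$, density of the orbit and continuity, invariance under the diagonal action of $\langle\varphi\rangle$ forces $K(x,y)=\kappa(x-y)$. So either you adopt the additive base operation and must concede that the hypothesis on $K$ is vacuous, or you adopt the kernel-evaluation convention and must formulate and use the correct non-invariance hypothesis on $K$; as it stands, (i) is not established in the sense the proposition intends. The ambiguity is partly the paper's, but a complete proof has to resolve it rather than acknowledge it.
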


This example shows that when the group acts transitively (dense orbits), the effective dimension drops to zero, yielding parametric rates for functions that are constant along orbits.

\subsection{The Dilation Group and Wavelets}

Let $E = \R$ and $\varphi : x \mapsto 2x$. The group $G = \langle\varphi\rangle \cong \Z$ acts by dyadic dilations.

\begin{proposition}\label{prop:wavelets}
For the dilation action on $\R$:
\begin{enumerate}[label=(\roman*)]
\item The twin-kernel hierarchy $\{K_j\}$ coincides with the scaling function projections in wavelet analysis (up to translation).
\item The linear subgroup $L$ is trivial for generic $K$.
\item The effective dimension is $d_{\mathrm{eff}} = 1$ (the orbits are rays from the origin).
\item The twin-H\"older class $\cH^s_{\mathrm{twin}}$ coincides with the Besov space $B^s_{2,\infty}$ for appropriate $s$.
\end{enumerate}
\end{proposition}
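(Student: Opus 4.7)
The plan is to prove the four claims in order, noting that (iv) reduces to (i) together with the classical wavelet characterization of Besov spaces. A preliminary simplification: although $\varphi(x) = 2x$ is not an isometry of $(\R, |\cdot|)$, it acts isometrically on $\R_+$ equipped with the multiplicative metric $d(x,y) = |\log(x/y)|$, which is the natural setting for the twin construction here. Equivalently, after the change of variable $u = \log x$, the dilation becomes a translation $u \mapsto u + \log 2$, so the framework of Section~\ref{sec:framework} applies verbatim to a translation action on $(\R, |\cdot|)$. I adopt this identification throughout.

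For part (i), I would expand \eqref{eq:twin_hierarchy} to obtain $K_j(x,y) = K(2^{-j}|x - y|/h_j)$ in the ambient Euclidean picture (equivalently, the translation kernel at scale $2^{-j}$ in the logarithmic coordinates), and then, under the dyadic choice $h_j = h_0 2^{-j}$, identify the integral operator associated with $K_j$ with the smoothing kernel $\sum_k \phi_{j,k}(x)\phi_{j,k}(y)$ of the scale-$j$ multiresolution projection $P_{V_j}$ when $K$ is taken to be the autocorrelation of the scaling function $\phi$. The ``up to translation'' clause absorbs the choice of origin around which $\phi$ is centered; the identification is made precise by comparing the reproducing kernels of the two associated RKHSs.

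For (ii), I would argue that $\varphi^n \in L$ forces $K(2^n r) = K(r)$ for all $r \geq 0$, which combined with $\supp(K) \subseteq [0,1]$ (Assumption~\ref{ass:kernel_main}) implies $n = 0$ whenever $K$ is not identically constant on $[0,1]$; hence $L = \{e\}$ for generic $K$. For (iii), each orbit $\{2^n x_0 : n \in \Z\}$ is a countable discrete subset whose $r$-covering number on any fixed compact region of $\R_+$ grows only logarithmically in $1/r$, so by Definition~\ref{def:orb_dim} we have $\dimorb(G/L) = 0$; since $\dim(E) = 1$, Definition~\ref{def:eff_dim} gives $\deff = 1$. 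The proposition's ``rays from the origin'' remark is heuristic: the orbit accumulates at $0$ and diverges to $\infty$ along a single half-line, but metrically it is zero-dimensional.

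For (iv), part (i) lets me rewrite the twin-H\"older condition $\|m_j - m\|_2 \leq C_m h_j^s = C_m 2^{-js}$ as $\|P_{V_j} m - m\|_2 \leq C_m 2^{-js}$, which is the classical characterization of $B^s_{2,\infty}(\R)$ by rate of linear approximation from an $r$-regular multiresolution analysis, valid in the window $0 < s < r$ (see \cite{Daubechies1992,Mallat2008}); equality of the two classes then follows by standard norm equivalence, which fixes the meaning of ``appropriate $s$.'' The main obstacle is really part (i): the twin kernel defined by \eqref{eq:twin_hierarchy} is a symmetric bivariate kernel but not a priori an orthogonal projection, whereas the wavelet scaling kernel is precisely the kernel of $P_{V_j}$. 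Bridging this requires either (a) identifying ``the space induced by $K_j$'' with the RKHS of $K_j$ and matching it with $V_j$ for the specific choice of $K$ as a scaling-function autocorrelation, or (b) comparing the Nadaraya--Watson operator induced by $K_j$ with the orthogonal projection $P_{V_j}$ and controlling the discrepancy in the relevant $L^2$ norm. I would handle this by restricting the identification to ``generic'' $K$ arising from orthogonal MRAs and treating the general case as a controlled perturbation, which is enough to yield equivalence of the function classes asserted in (iv).
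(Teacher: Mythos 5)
Your preliminary reduction undermines the rest of the argument. If you pass to the invariant metric $d(x,y)=|\log(x/y)|$ (equivalently to log coordinates, where the dilation becomes translation by $\log 2$), then the action is isometric and $d(\varphi^{-j}x,\varphi^{-j}y)=d(x,y)$, so the transport in \eqref{eq:twin_hierarchy} acts trivially: $K_j(x,y)=K(d(x,y)/h_j)$ is just a bandwidth family of a translation-invariant kernel in the new coordinates. By the paper's own Proposition~\ref{prop:translation}(i), that situation forces $L=G$, directly contradicting claim (ii); you then prove (ii) by silently reverting to the Euclidean metric ($K(2^nr)=K(r)$ with $r=|x-y|$), so parts (i)--(ii) of your argument are carried out in two incompatible geometric settings. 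The Euclidean picture has its own problem: there $K_j(x,y)=K(2^{-j}|x-y|/h_j)$, and with the dyadic choice $h_j=h_0 2^{-j}$ that you specify for (i) the group transport exactly cancels the bandwidth decay, giving $K_j(x,y)=K(|x-y|/h_0)$ for every $j$. The hierarchy is degenerate, so it cannot coincide with the genuinely $j$-dependent multiresolution kernels $\sum_k\phi_{j,k}(x)\phi_{j,k}(y)$; to get a true multiscale family you must let either the group or the bandwidth produce the scales (e.g.\ constant $h_j$ with the transport supplying the factor $2^{-j}$), and you must say explicitly which metric you are working in and accept its consequences for $L$ and for Definition~\ref{def:orb_dim}.

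The parts that do stand are (ii) in the Euclidean reading, where your compact-support iteration in fact shows $L=\{e\}$ for \emph{every} admissible nonconstant $K$, and (iii), where your computation $\dimorb(G/L)=0$, hence $d_{\mathrm{eff}}=1$, is a more defensible route to the stated value than the paper's parenthetical about rays (though you should note that Definition~\ref{def:orb_dim} needs the localization to compacta you introduce, since the orbit is unbounded). For (i) and hence (iv), however, the key identification is still missing: the operator the paper attaches to $K_j$ is the Nadaraya--Watson smoother (see Lemma~\ref{lem:bias}), not an orthogonal projection, and for a generic orthonormal scaling function $\phi$ the kernel $\sum_k\phi(x-k)\phi(y-k)$ is only $\Z$-periodic in $(x,y)$, not a radial function of $x-y$, so choosing $K$ as the autocorrelation of $\phi$ does not literally reproduce the kernel of $P_{V_j}$ (this works for special cases such as Shannon/Meyer, not generically). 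The ``controlled perturbation'' you invoke to bridge the smoother and the projection is precisely where the content of (i) and the class equality in (iv) lies, and it is not carried out; the classical characterization $\|P_{V_j}m-m\|_2\lesssim 2^{-js}\iff m\in B^s_{2,\infty}$ for $0<s<r$ is correct and is all the paper itself offers (via its citations), but your reduction to it is not yet justified.
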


This connection shows that wavelet methods can be viewed as a special case of TwinKernel estimation with respect to the dilation group. The characterization in terms of Besov spaces follows from \cite{DeVoreLorentz1993,Triebel1983}.

\subsection{Translation Groups and Shift-Invariant Kernels}

Let $E = \R^d$ and $\varphi : x \mapsto x + e_1$, where $e_1$ is the first standard basis vector. The group $G = \langle\varphi\rangle \cong \Z$ acts by translations.

\begin{proposition}\label{prop:translation}
For the translation action:
\begin{enumerate}[label=(\roman*)]
\item If $K(x, y) = K(x - y)$ is translation-invariant, then $L = G$ and all twin-kernels coincide.
\item The effective dimension is $d_{\mathrm{eff}} = d$.
\item The TwinKernel estimator reduces to the classical kernel estimator.
\end{enumerate}
\end{proposition}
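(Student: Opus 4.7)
The plan is to verify the three claims in sequence, since (i) is the substantive observation and (ii)--(iii) follow readily once (i) is established.

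For (i), I would start from the explicit form of translation invariance: writing $K(x,y) = \tilde K(\|x-y\|)$ for some $\tilde K : \R_+ \to \R_+$, and noting that $\varphi^{-j}$ is translation by $-j e_1$, the difference $\varphi^{-j}\cdot x - \varphi^{-j}\cdot y = x - y$ is preserved. Substituting into Definition~\ref{def:twin_hierarchy} yields
\begin{equation*}
K_j(x,y) = K\!\left(\frac{d(\varphi^{-j}\cdot x, \varphi^{-j}\cdot y)}{h_j}\right) = K\!\left(\frac{d(x,y)}{h_j}\right),
\end{equation*}
so the transported kernels coincide with the base kernel (modulo the bandwidth $h_j$, which plays no role in the algebraic identification). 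Applied to the generic twin-kernel definition \eqref{eq:twin_kernel_def}, this says $K_g = K$ for every $g \in G$, hence the corresponding twin operation $\odot_g$ equals the base operation $\diamond$. By part (ii) of Proposition~\ref{prop:fund_iso}, this places every $g \in G$ in $\ker(\pi) = L$, so $L = G$.

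For (ii), I would simply invoke Proposition~\ref{prop:eff_dim_bounds}(ii), which states $d_{\mathrm{eff}} = d$ iff $G = L$. Having just proved $L = G$, the conclusion is immediate. As a sanity check, one can verify directly: the quotient $G/L$ is trivial, so the orbits in the quotient collapse to single points, giving $N(r) \equiv 1$ and $\dimorb(G/L) = 0$, whence $d_{\mathrm{eff}} = \dim(E) - 0 = d$.

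For (iii), I would substitute the identity $K_j(x,y) = K(d(x,y)/h_j)$ established in (i) into the Nadaraya--Watson formula \eqref{eq:NW_est}, obtaining the familiar classical estimator
\begin{equation*}
\hat m_j(x) = \frac{\sum_{i=1}^n K(\|x - X_i\|/h_j) Y_i}{\sum_{i=1}^n K(\|x - X_i\|/h_j) + \eta_n},
\end{equation*}
whose only dependence on $j$ is through the bandwidth $h_j$. The penalized model-selection step \eqref{eq:selected_level} then collapses to a data-driven bandwidth choice over the dyadic grid $\{h_j = h_0 \rho^j\}$, i.e., the classical adaptive kernel estimator with geometric bandwidth grid.

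The only subtlety worth flagging, rather than a true obstacle, is the identification between the abstract twin operation $\odot_g$ (which produces elements of $E$) and the kernel transport $K_g$ (which produces real numbers); Proposition~\ref{prop:fund_iso} supplies the bridge via the homomorphism $\pi$, so we can pass from ``all transported kernels equal $K$'' to the algebraic statement $L = G$ without further work. Everything else is routine substitution.
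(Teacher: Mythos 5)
Your proposal is correct: translation invariance of $K$ makes every transported kernel equal the base kernel (up to bandwidth), hence $\odot_g=\diamond$ for all $g$, so $L=G$ by Proposition~\ref{prop:fund_iso}, and (ii)--(iii) follow from Proposition~\ref{prop:eff_dim_bounds}(ii) and substitution into \eqref{eq:NW_est}, exactly as the paper intends. The paper itself states Proposition~\ref{prop:translation} without proof, treating it as a routine special case, and your write-up (including the honest flag about identifying kernel invariance with membership in $L$, a looseness already present in the paper's own ``kernel evaluation as $\diamond$'' convention) supplies precisely the intended verification.
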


This example confirms that the classical setting is recovered when the group action preserves the kernel structure.

\subsection{Local Polynomial Estimation}

Local polynomial estimation of order $r$ can be viewed as a limiting case of TwinKernel estimation. Consider the ``infinitesimal'' group action generated by differentiation.

\begin{proposition}\label{prop:locpoly}
Let $K_{j,r}$ be the local polynomial kernel of order $(j, r)$ defined via orthogonal polynomials. Then:
\begin{enumerate}[label=(\roman*)]
\item The TwinKernel estimator with $K_j = K_{j,r}$ coincides with the local polynomial estimator of degree $r$.
\item Boundary adaptation is automatic: $K_{j,r}$ has equivalent kernels near boundaries.
\item The effective dimension is $d_{\mathrm{eff}} = d$ (the underlying dimension of $E$).
\end{enumerate}
\end{proposition}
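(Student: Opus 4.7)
The plan is to treat the three parts in the order (i), (iii), (ii), since (i) gives the representation used in (ii) and (iii) is essentially geometric.

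For part (i), I would start from the defining weighted least squares problem \eqref{eq:locpoly}. Writing $Z_i(x) = (\psi_0(X_i - x), \ldots, \psi_r(X_i - x))^\top$ and $W_j(x) = \mathrm{diag}(K_j(x, X_1), \ldots, K_j(x, X_n))$, the minimizer is $\hat{a}(x) = (Z^\top W_j Z)^{-1} Z^\top W_j Y$, and the local polynomial estimator is $e_1^\top \hat{a}(x)$. A direct expansion shows this can be rewritten as a linear smoother $\sum_i K^*_{j,r}(x, X_i) Y_i / \sum_i K^*_{j,r}(x, X_i)$ where $K^*_{j,r}$ is the so-called equivalent kernel, obtained by orthogonalizing $K_j$ against polynomials of degree $\le r$ in the metric weighted by the design. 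Setting $K_{j,r} := K^*_{j,r}$ in Definition~\ref{def:level_j_est} then gives the coincidence claim, with the regularization $\eta_n$ playing no role because the denominator of the equivalent-kernel expression is $1$ by construction.

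For part (iii), I would argue that the ``polynomial action'' is acting on the space of local approximations, not on the input space $E$. Concretely, if we regard the degree-$r$ polynomial fit as generated by the infinitesimal translations $x \mapsto x + t$ (so that the moments $\psi_k$ play the role of images under $\varphi^k$), the orbit of any point in $E$ under this action is the single point itself once we specialize to a fixed scale $j$; alternatively, the action is on the fiber $\R^{r+1}$ of polynomial coefficients at $x$, and the induced action on $E$ is trivial. Hence $N(r) = 1$ for the orbits in $E$, so $\dimorb(G/L) = 0$ by Definition~\ref{def:orb_dim}, and $d_{\mathrm{eff}} = \dim(E) = d$ by Definition~\ref{def:eff_dim}. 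Consistency with Proposition~\ref{prop:eff_dim_bounds}(ii) is automatic since the base operation is preserved by the polynomial shifts on each orbit.

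For part (ii), I would invoke the standard moment characterization of equivalent kernels: $K^*_{j,r}$ satisfies $\int K^*_{j,r}(x, y)(y - x)^k\, d\mu(y) = \mathbf{1}\{k = 0\}$ for $k = 0, 1, \ldots, r$, both in the interior and near the boundary of $\supp(\mu)$, because the weighted projection onto polynomials of degree $\le r$ is defined locally and self-corrects when the support of $K_j(x, \cdot)$ is truncated. This moment property is the classical boundary-adaptation statement of Fan and Gijbels \cite{Fan1993,FanGijbels1996}; I would cite their Theorems and briefly verify that the twin-kernel structure does not disturb it, since the transport by $\varphi^{-j}$ in Definition~\ref{def:twin_hierarchy} is an isometry and hence commutes with the polynomial projection up to a reparameterization of $\psi_k$.

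The main obstacle I foresee is in part (i): formalizing the ``polynomial action'' as a genuine group action compatible with Definition~\ref{def:twin_op}, since differentiation generates a Lie algebra rather than a discrete group of the type $G = \langle \varphi \rangle$ used throughout the framework. I would handle this by treating the local polynomial case as the limit of twin kernels associated with a sequence of discrete affine shifts $\varphi_\epsilon : x \mapsto x + \epsilon$ as $\epsilon \downarrow 0$, and verifying that the equivalent-kernel formula obtained for each $\epsilon$ converges to the polynomial equivalent kernel; the Lipschitz assumption (K4) on $K$ in Assumption~\ref{ass:kernel} provides enough regularity for this limit to be uniform on compact sets.
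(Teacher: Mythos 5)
The paper states Proposition~\ref{prop:locpoly} without proof, so the comparison can only be against the framework's own definitions, and on that score your plan has one genuine flaw. Your proposed resolution of the ``main obstacle'' in part (i) --- realizing $K_{j,r}$ as a limit of twin kernels attached to discrete shifts $\varphi_\epsilon : x \mapsto x + \epsilon$ --- cannot work within Definition~\ref{def:twin_hierarchy}. Translations are isometries of $(E,d)$, so for every $\epsilon$ the transported kernel satisfies $K\bigl(d(\varphi_\epsilon^{-j}\cdot x, \varphi_\epsilon^{-j}\cdot y)/h_j\bigr) = K\bigl(d(x,y)/h_j\bigr)$; the twin kernel is identical to the base kernel for all $\epsilon$ and all $j$ (this is exactly the degenerate situation recorded in Proposition~\ref{prop:translation}, where $L = G$). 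Hence the $\epsilon \downarrow 0$ limit returns $K_0$ at bandwidth $h_j$, not the degree-$r$ equivalent kernel: no amount of Lipschitz regularity of $K$ rescues a limit whose every term is the wrong object. The orthogonalization against polynomials of degree $\leq r$ simply cannot be generated by a group acting on $E$ through the metric; it must come from an action on the local approximation space (the fiber of polynomial coefficients, i.e.\ the $r$-jet at $x$), which is precisely the reading you adopt --- correctly --- in your argument for part (iii). The consistent route is to use that reading throughout: define $K_{j,r}$ directly as the (population) equivalent kernel obtained by projecting $K_j$ onto the orthogonal complement of polynomials of degree $\leq r$, regard the ``polynomial action'' as acting on the jet fiber with trivial induced action on $E$ (so $\dimorb = 0$ and $d_{\mathrm{eff}} = d$ exactly as you argue), and prove (i) by the weighted-least-squares/equivalent-kernel identity without claiming that $K_{j,r}$ arises as a bona fide twin kernel of a metric isometry.

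Two smaller points in part (i) also need care. First, the equivalent kernel you construct by orthogonalizing ``in the metric weighted by the design'' is data-dependent (it involves $(Z^\top W_j Z)^{-1}$), so it is not a fixed symmetric kernel on $E \times E$ as the hierarchy requires; with a deterministic population equivalent kernel the finite-sample coincidence in (i) is only approximate. Second, the equivalent kernel takes negative values (certainly at boundaries and for $r \geq 2$), so Definition~\ref{def:level_j_est} with the additive $\eta_n$ in the denominator does not literally reproduce the local polynomial estimator even when the effective weights sum to one; you need either $\eta_n = 0$ for this comparison or an explicit remainder bound. Your treatment of (ii) via the moment conditions and the Fan--Gijbels boundary theory is the standard and appropriate argument and is fine as sketched.
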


\subsection{Product Groups and Anisotropic Smoothness}

For anisotropic problems, one can use product groups $G = G_1 \times G_2$ acting on $E = E_1 \times E_2$ with different actions on each factor.

\begin{example}[Anisotropic Dilation]\label{ex:anisotropic}
Let $E = \R^2$, $\varphi_1 : (x_1, x_2) \mapsto (2x_1, x_2)$, and $\varphi_2 : (x_1, x_2) \mapsto (x_1, 2x_2)$. The group $G = \langle\varphi_1\rangle \times \langle\varphi_2\rangle$ allows independent scaling in each direction.
\end{example}

For a function $m$ that is smooth of order $s_1$ in $x_1$ and order $s_2$ in $x_2$, the optimal twin-kernel estimator achieves the anisotropic rate:
\begin{equation}\label{eq:anisotropic_rate}
\E \|\hat{m} - m\|_2^2 \lesssim n^{-\frac{2\bar{s}}{2\bar{s} + 1}},
\end{equation}
where $\bar{s} = (s_1^{-1} + s_2^{-1})^{-1}$ is the harmonic mean of the smoothness indices. This result is consistent with \cite{Kerkyacharian2001,HoffmannLepski2002}.

\section{Discussion}\label{sec:discussion}

\subsection{Summary}

We have developed the TwinKernel framework for nonparametric estimation, which unifies and extends classical kernel methods by exploiting group structure. The main theoretical contributions are:
\begin{itemize}[leftmargin=*]
\item Non-asymptotic oracle inequalities with explicit constants.
\item Introduction of twin-H\"older regularity classes capturing orbital smoothness.
\item Characterization of the effective dimension $d_{\mathrm{eff}}$ governing convergence rates.
\item Proof of adaptation to unknown twin-regularity with (nearly) optimal rates.
\end{itemize}

\subsection{Extensions}

Several extensions are natural:

\paragraph{Non-cyclic groups.} While we focused on cyclic groups $G = \langle\varphi\rangle$ for simplicity, the framework extends to finitely generated groups. The effective dimension would then depend on the structure of $G/L$ more generally.

\paragraph{Continuous groups.} For Lie groups acting continuously, the discrete hierarchy $\{K_j\}$ would be replaced by a continuous family $\{K_t\}_{t \geq 0}$. The model selection problem becomes one of choosing an optimal scale parameter.

\paragraph{Higher-order local polynomial kernels.} Combining the twin structure with local polynomial fitting of order $r$ would yield estimators with improved bias properties near boundaries and at singular points of the group action.

\paragraph{Density estimation.} The framework applies directly to density estimation by replacing the Nadaraya--Watson estimator with the kernel density estimator.

\paragraph{Point processes.} For intensity estimation of point processes, the TwinKernel approach can be combined with martingale methods, yielding estimators that adapt to orbital regularity of the intensity function; see \cite{RamlauHansen1983}.

\subsection{Open Problems}

We conclude with several open questions:

\begin{enumerate}
\item \textbf{Optimal constants.} Can the constant $C$ in the oracle inequality be improved to the Pinsker constant for specific classes?

\item \textbf{Removing the logarithmic factor.} Is the $\log n$ factor in the adaptation rate necessary, or can it be removed by a more refined selection procedure? Partial results in this direction are given by \cite{GoldenshlugerLepski2011}.

\item \textbf{Computational efficiency.} For large groups, computing all $K_j$ may be expensive. What are efficient approximations that preserve the theoretical guarantees?

\item \textbf{Data-driven group selection.} Can the group $G$ itself be learned from data, rather than specified a priori?

\item \textbf{Connections to neural networks.} Do equivariant neural networks \cite{Cohen2016,Kondor2018} implicitly perform a form of twin-kernel estimation?
\end{enumerate}




\end{document}